\newtheorem{rems*}{Remarks}
\newtheorem{rem*}{Remark}
\newtheorem{lem}{Lemma}[section]
\newtheorem{prop}{Proposition}[section]
\newtheorem{thm}{Theorem}[section]
\newtheorem{rem}{Remark}[section]
\newtheorem{cor}{Corollary}[section]
\newcommand{\myol}[2][3]{{}\mkern#1mu\overline{\mkern-#1mu#2}}
\newcommand{\e}[1]{\mathrm{e}^{#1}}
\newcommand{\vecapp}[1]{\underline{#1}}
\newcommand{\vecdiff}[1]{\mathrm{\textbf{e}}_{#1}}
\newcommand{\vecbound}[1]{\epsilon_{#1}}
\newcommand{\vectrunc}[2][3]{#1\left|\right._{#2}}
\newcommand{\convomat}[1]{\mathrm{M}(#1)}
\newcommand{\normenu}[1]{\left\|#1\right\|_\nu}
\newcommand{\R}{\mathbb R}
\newcommand{\bu}{\bar u}
\newcommand{\bv}{\bar v}
\title{Computer-assisted proofs for the many steady states of a chemotaxis model with local sensing}
\author{Maxime Breden \thanks{CMAP, CNRS, \'Ecole polytechnique, Institut Polytechnique de
Paris, 91120 Palaiseau, France. \texttt{maxime.breden@polytechnique.edu}} $\quad$ Maxime Payan \thanks{CMAP, CNRS, \'Ecole polytechnique, Institut Polytechnique de
Paris, 91120 Palaiseau, France. \texttt{maxime.payan@polytechnique.edu}} } 
\date{\today}
\begin{document}
\setlength{\parindent}{0cm}
\setlength{\abovedisplayskip}{5pt}
\setlength{\abovedisplayshortskip}{5pt}
\setlength{\belowdisplayskip}{5pt}
\setlength{\belowdisplayshortskip}{5pt}
\renewcommand{\arraystretch}{0.7} \setlength\arraycolsep{1mm}

\maketitle

\begin{abstract}
    We study the steady states of a system of cross-diffusion equations arising from the modeling of chemotaxis with local sensing, where the motility is a decreasing function of the concentration of the chemical. In order to capture the many different equilibria that sometimes co-exist, we use computer-assisted proofs: Given an approximate solution obtained numerically, we apply a fixed-point argument in a small neighborhood of this approximate solution to prove the existence of an exact solution nearby. This allows us to rigorously study the steady states of this cross-diffusion system much more extensively than what previously possible with purely pen-and-paper techniques. Our computer-assisted argument makes use of Fourier series decomposition, which is common in the literature, but usually restricted to systems with polynomial nonlinearities. This is not the case for the model considered in this paper, and we develop a new way of dealing with some nonpolynomial nonlinearities in the context of computer-assisted proofs with Fourier series. 
\end{abstract}

\section{Introduction} \label{sec:intro}

Chemotaxis is one of the critical mechanisms through which the behavior of bacteria and other small organisms can be understood, and the patterns they form explained. Following the seminal papers of Patlak~\cite{Pat53}, Keller and Segel~\cite{KelSeg70,KelSeg71}, chemotaxis models have been intensively studied in the mathematical literature. However, even the simplest patterns generated by such models, i.e. (nonhomogenenous) stationary solutions, often remains very challenging to rigorously analyze.

The purpose of this paper is to provide a detailed study of the (positive) steady states of the following chemotaxis model:
\begin{align}
\label{eq:syst_para}
\left\{
\begin{aligned}
&\partial_t u = \Delta\left(\gamma(v)u\right) + \sigma u(1-u) \qquad & \text{in } (0,\infty)\times\Omega, \\
&\partial_t v = d\Delta v + u - v & \text{in } (0,\infty)\times\Omega, \\
&\partial_n \left(\gamma(v)u\right) = 0 = \partial_n v  & \text{on } (0,\infty)\times\partial\Omega. \\
\end{aligned}
\right.
\end{align}
Here, $u=u(t,x)$ denotes the bacteria or cell density, $v=v(t,x)$ the density of a chemical produced by the bacteria, and $\Omega$ is a bounded domain of $\R^N$. Throughout the paper, we assume that $\gamma$ is smooth and positive on $(0,\infty)$, $d>0$ and $\sigma\geq 0$. The most interesting situation, at least if one wants to maybe observe pattern formation, is the one where the bacteria are in some sense attracted by the chemical. In~\eqref{eq:syst_para}, this can be modeled by taking $\gamma$ decreasing. We note that, while we will only consider examples where $\gamma$ is indeed decreasing, this assumption is in fact not needed for our results (and neither is the positivity of $\sigma$).

System~\eqref{eq:syst_para} is in fact a specific case of one of the original models introduced by Keller and Segel in~\cite{KelSeg71} (up to the logistic reaction term), where the equation on $u$ writes
\begin{align}
\label{eq:minimalKS}
    \partial_t u = \nabla\cdot\left(\mu(v)\nabla u - \chi(v)u\nabla v\right).
\end{align}
The diffusive term in~\eqref{eq:syst_para} is then obtained by assuming that the diffusivity $\mu$ and the chemosensitivity $\chi$ are related by $\mu' = -\chi$, which corresponds to assuming that the bacteria are only sensitive to the concentration of $v$, but not to its gradient (\emph{local sensing}). For more details on these modeling interpretations, we refer to~\cite{DKTY19} and the references therein. System~\eqref{eq:syst_para} was also reintroduced in~\cite{Fu++12}, as an attempt to explain the stripe patterns observed in~\cite{Liu++11}, and it has become the subject of recent scrutiny from the mathematical community. 

For $\sigma>0$, global well-posedness for the initial value problem associated to~\eqref{eq:syst_para} was first established in~\cite{JinKimWan18} in dimension at most 2, and under some further assumptions on $\gamma$, encompassing prototypical examples such as
\begin{align}
\label{eq:examples_gamma}
    \gamma(v) = \frac{a}{(1+bv)^c}, \quad \gamma(v) = \frac{a}{(1+e^{bv})^c}\quad\text{or}\quad \gamma(v) = 1-\frac{v}{\sqrt{1+v^2}},
\end{align}
for $a,b,c> 0$. Global well-posedness was then also obtained in higher dimension in~\cite{LiuXu19,WanWan19}, under an additional assumption on $\sigma$, which has to be sufficiently large.

The case $\sigma=0$ is more delicate, as the decay of $\gamma$ at infinity seems to play a critical role. Indeed, for simple examples of $\gamma$, global existence could be established when $\gamma$ decays algebraically, whereas a critical mass threshold leading to blow-ups was identified for exponentially decaying $\gamma$'s. More precisely, assuming $\gamma(v) = \frac{a}{v^c}$ with $a$ small enough, the global existence of bounded solutions was obtained in~\cite{YooKim17} in any dimension, whereas global weak solutions are obtained in~\cite{DKTY19} for $\gamma(v) = \frac{a}{1+v^c}$, without any smallness assumption on $a$ but with restrictions on $c$ depending on the dimension. On the other hand, when $\gamma(v) = e^{-bv}$, blow-up can occur if the initial mass is large enough~\cite{JinWan20}, although only in infinite time~\cite{BurLauTre21,FujJia21}, which contrasts with the famous behavior for the minimal Keller-Segel model (i.e. when taking $\mu$ and $\chi$ constant in~\eqref{eq:minimalKS}), where blow-up occurs in finite time~\cite{HerVel97}.

Regarding steady states of system~\eqref{eq:syst_para}, the first trivial observation is that $(0,0)$ and $(1,1)$ are the only homogenenous steady states when $\sigma>0$, whereas $(c,c)$ is a steady state for any $c\in\R$ when $\sigma = 0$. Assuming $\sigma>0$, a straightforward linear stability analysis of the positive steady state $(1,1)$ yields the following necessary conditions for its linear instability (which are also sufficient, up to choosing a compatible domain):
\begin{align}
\label{eq:lin_stab}
\left\{
\begin{aligned}
    \sigma d  + \gamma(1) + \gamma'(1) &< 0, \\
    (\sigma d  + \gamma(1) + \gamma'(1))^2 - 4\sigma d \gamma(1) &> 0.
\end{aligned}
\right.
\end{align}
In particular, one immediately sees that, when the product $\sigma d$ is large enough, the trivial steady state $(1,1)$ is linearly stable. Using a priori bounds and the Leray-Schauder index, one can go further and show that whenever $\sigma$ is large enough (with a threshold depending on $d$), or whenever $d$ is large enough (with a threshold depening on $\sigma$), there exists no other positive steady state of~\eqref{eq:syst_para}, see~\cite{MPW19}. 

Regardless of the (nonnegative) value of $\sigma d$, one also sees that, if $\gamma$ is such that $\gamma(1) + \gamma'(1) \geq 0$, then $(1,1)$ must be linearly stable. On the other hand, if $\gamma(1) + \gamma'(1) < 0$, condition~\eqref{eq:lin_stab} does hold for at least some values of $\sigma$ and $d$. One can then use local bifurcation theory to predict the apparition of branches of nonhomogeneous steady states, and further usage of the Leray-Schauder index and of other global bifurcation tools can then provide some further insight on the existence of nontrivial steady states. Such bifurcation analyses were carried out in~\cite{MPW19,WX21}, using $\sigma$ as a bifurcation parameter. These works do provide existence results regarding non trivial steady states, with typical statements of the form: ``if $\sigma$ belongs to a given interval (depending on the other parameters), there exists at least one non trivial steady state of~\eqref{eq:syst_para}''. However, such results are still very far from providing a full description of the steady states of~\eqref{eq:syst_para}, especially when contrasted with numerical experiments, which suggest the existence of intricate bifurcation diagrams of steady states, featuring many co-existing steady states for some given value of $\sigma$ (see Figure~\ref{fig:continuous-diag}).

\begin{figure}[ht]
    \centering
    \includegraphics[width=0.8\paperwidth]{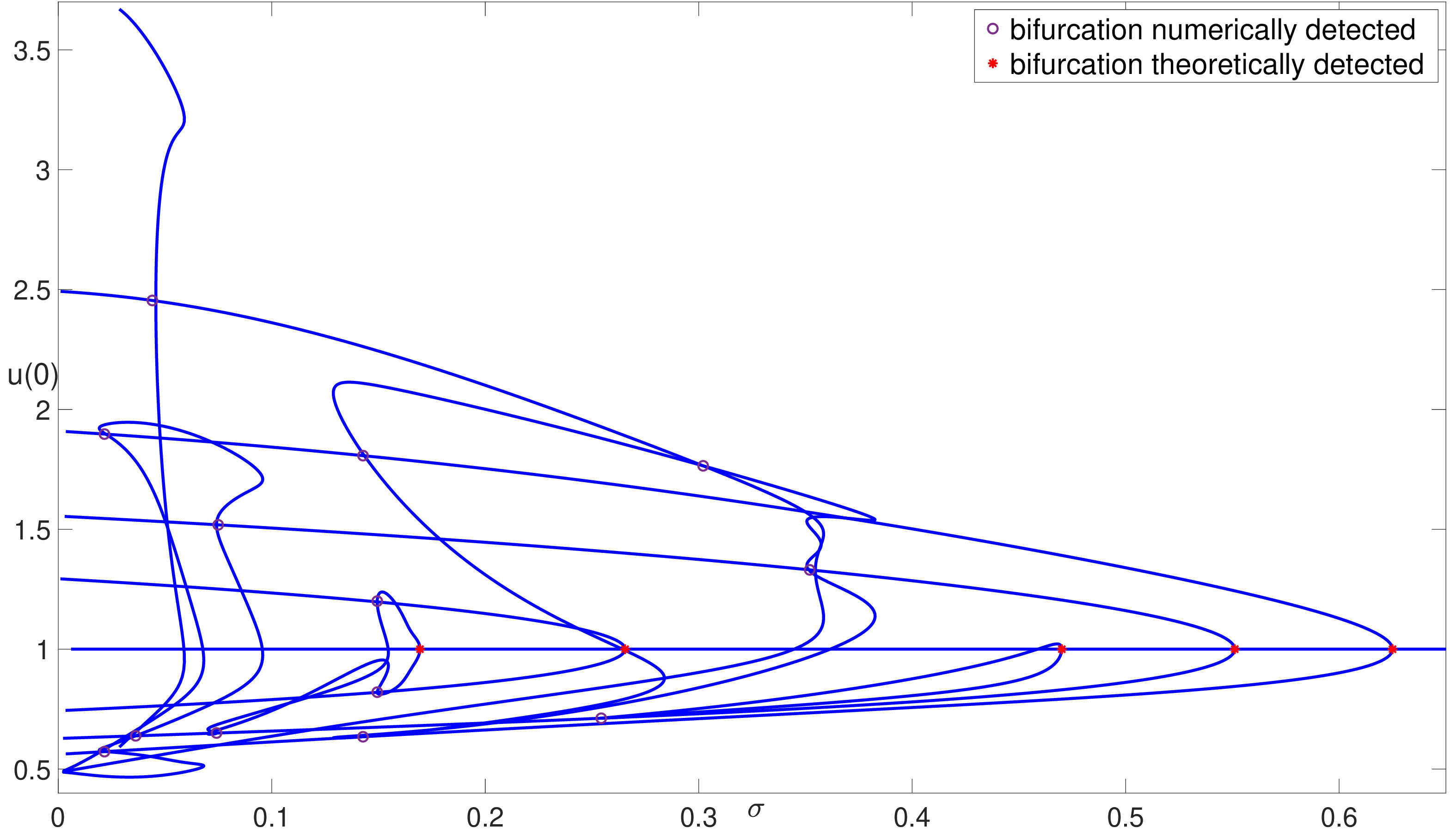}
    \caption{Numerical bifurcation diagram of steady states of the system \eqref{eq:syst_para}, with $\Omega=(0,3\pi)$, $d=1$ and $\gamma(x) = \dfrac{1}{1+x^9}$. }
    \label{fig:continuous-diag}
\end{figure}

When $\sigma = 0$, the linear instability condition for a given steady state $(c,c)$ reduces to $\gamma(c) + \gamma'(c)<0$ (see~\cite{DKTY19} for the details).
 Here again, there are already results going beyond this linear analysis. First negative ones, assuming that $\gamma$ decay sublinearly, or more precisely, that $v\mapsto v\gamma(v)$ is increasing. Under this hypothesis, it is shown in~\cite{DesLauTreWin23} that the system~\eqref{eq:syst_para} admits a Lyapunov functional, which prevents the existence of any nontrivial steady state. Second, positive results, when $\gamma(c) + \gamma'(c)<0$, which rely again on global bifurcation theory~\cite{WX21}. However, in the $\sigma=0$ case also, the available results fail to encompass the variety of steady states that can be observed numerically.

 In this paper, we develop a computer-assisted methodology providing a rigorous description of the steady states of~\eqref{eq:syst_para}, which is essentially as complete and as precise as what can be observed numerically. More precisely, for any candidate approximate steady state denoted $(\bu,\bv)$ of~\eqref{eq:syst_para}, we provide sufficient conditions ensuring the existence of a true steady state in a small and explicit neighborhood of $(\bu,\bv)$. Given any $(\bu,\bv)$, typically obtained numerically, we can then explicitly check whether these conditions hold, and if they do conclude to the existence of a nearby true steady state of~\eqref{eq:syst_para}.

 \begin{rem}
    In this paper, we only focus on the one dimensional case, which is already very rich, and write $\Omega=(a,b)$, but higher spatial dimensions could also be investigated (in a very similar way for rectangular domains, but with more challenging adaptations for more complicated geometries).
\end{rem}

As an example, we show below the kind of output that can be expected from this procedure, for two different choices of $\gamma$.
\begin{figure}[ht]
    \centering
    \includegraphics[width=0.6\paperwidth]{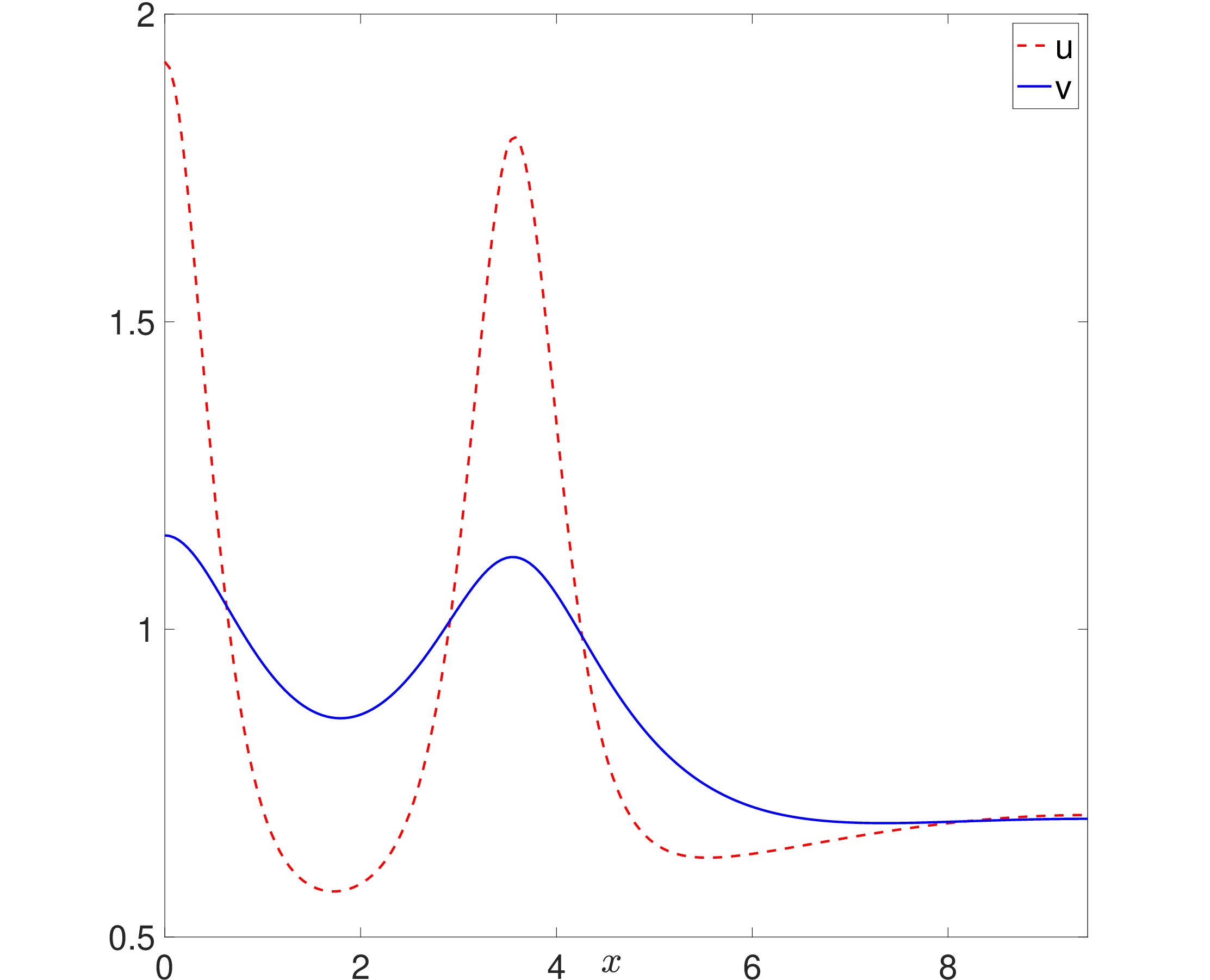}
    \caption{An approximate steady state of \eqref{eq:syst_para}, associated to Theorem~\ref{th:single_sol}.}
    \label{fig:single}
\end{figure}

 \begin{thm}
 \label{th:single_sol}
     Let $\bar{u},\bar{v}$ be the functions depicted on Figure~\ref{fig:single} and whose precise description in terms of Fourier coefficients can be found at \cite{GitHub}. There exists a smooth steady states $(u,v)$ of the system~\ref{eq:syst_para} with the parameters $\sigma = 0.053$, $d = 1$, $\Omega=(0,3\pi)$ and $\gamma(x) = \dfrac{1}{1+x^9}$, such that
     \begin{align*}
         \sup_{x\in\Omega} |u(x)-\bar{u}(x)| + \sup_{x\in\Omega} |v(x)-\bar{v}(x)| \leq 2.5197\times 10^{-8}.
     \end{align*}
 \end{thm}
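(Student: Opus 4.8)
The plan is to recast the steady-state problem as a zero-finding problem $F(u,v) = 0$ in a suitable Banach space of Fourier coefficient sequences, and then apply a Newton--Kantorovich-type fixed-point argument (the so-called radii-polynomial approach) in a ball centered at the numerical candidate $(\bar u, \bar v)$. Since $\Omega = (0,3\pi)$ with Neumann boundary conditions, I would expand $u$ and $v$ in cosine series; the linear part of the steady-state equations ($\Delta(\gamma(v)u) + \sigma u(1-u) = 0$ and $d\Delta v + u - v = 0$) becomes diagonal in Fourier space up to the nonlinear terms. The first equation can be integrated twice: writing $w = \gamma(v)u$, the steady equation reads $w'' = -\sigma u(1-u)$, so the genuinely problematic object is the nonpolynomial nonlinearity $\gamma(v)u = u/(1+v^9)$, i.e. one must control the Fourier coefficients of $1/(1+v^9)$ given those of $v$. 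This is exactly the nonpolynomial difficulty the abstract part of the paper (which I may invoke) is built to handle: one introduces an auxiliary variable, say $\psi = 1/(1+v^9)$, together with the polynomial constraint $\psi(1+v^9) = 1$, thereby trading the nonpolynomial nonlinearity for a larger polynomial system on $(u,v,\psi)$ whose Cauchy products are tractable in the $\ell^1_\nu$-type weighted norm $\|\cdot\|_\nu$ defined earlier.

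Concretely, I would: (i) fix the weighted Banach algebra $X = \ell^1_\nu$ of cosine-series coefficients and define $F: X^3 \to X^3$ encoding the two steady equations plus the algebraic constraint for $\psi$; (ii) take the numerical solution $(\bar u, \bar v, \bar\psi)$, a finite truncation, and a numerically computed approximate inverse $A$ of $DF(\bar u, \bar v, \bar\psi)$ — here $A$ acts as identity-plus-finite-rank-correction, with the tail handled explicitly using the $1/(\text{mode})^2$ decay coming from the double integration of the Laplacian; (iii) define the Newton-like operator $T(x) = x - A F(x)$ and establish the three bounds of the radii-polynomial lemma: a residual bound $\|T(\bar x) - \bar x\|_\nu \le Y$, a Lipschitz/derivative bound $\sup_{x \in \overline{B(\bar x, r)}} \|DT(x)\|_{\mathcal{B}(X)} \le Z(r)$ with $Z(r) = Z_1 + Z_2 r$ (using that the constraint is quadratic-ish and $\gamma$ polynomial in $v$, and the algebra property $\|ab\|_\nu \le \|a\|_\nu\|b\|_\nu$ of the weighted norm), and then find $r>0$ with $Y + Z_1 r + Z_2 r^2 \le r$; (iv) conclude by the Banach fixed-point theorem that $T$ has a unique fixed point $x^\star$ in $\overline{B(\bar x, r)}$, hence $F(x^\star) = 0$. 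Smoothness of the resulting $(u,v)$ follows from elliptic regularity (the right-hand sides are smooth once $u,v$ are continuous and $v$-values stay in the domain of analyticity of $x\mapsto 1/(1+x^9)$, which the enclosure guarantees by keeping $v$ bounded away from any real root of $1+x^9$ — there are none on $\R$, so only boundedness is needed).

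The final $\sup$-norm estimate $2.5197\times 10^{-8}$ then comes from the embedding $\|f\|_{C^0(\Omega)} \le \|f\|_\nu$ (valid for $\nu \ge 1$ with cosine series) applied to $u - \bar u$ and $v - \bar v$, so that the sum of $\sup$-norm errors is bounded by the radius $r$ of the contraction ball (or a component-wise refinement of it), and one simply reports the rigorously computed value of $r$, all arithmetic being carried out in interval arithmetic to make the verification a finite rigorous computation.

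The main obstacle I expect is step (iii), specifically obtaining a sharp and rigorous bound on the Fourier coefficients of $\psi = 1/(1+v^9)$ and on the corresponding block of $DF$: one must propagate the enclosure on $v$ through a degree-nine Cauchy product, control the infinite tail of the $\psi$-series (the constraint $\psi(1+v^9)=1$ determines the tail only implicitly, so a fixed-point or Neumann-series argument \emph{inside} the $\psi$ component is needed, with a uniform lower bound on $|1+v^9|$ to invert the relevant operator), and do all of this with enough sharpness that the radii polynomial $Y + Z_1 r + Z_2 r^2 - r$ actually has a positive root. Getting $Z_1 < 1$ in particular requires the truncation dimension to be chosen large enough, and the $\psi$-tail estimate is where the new nonpolynomial-handling technique advertised in the abstract does the essential work.
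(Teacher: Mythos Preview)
Your overall framework matches the paper: cosine series on $\Omega=(0,3\pi)$, the weighted Banach algebra $\ell^1_\nu$, a Newton--Kantorovich argument with bounds $Y,Z_1,Z_2$, and the final $C^0$ estimate via $\|f\|_{C^0}\le\|\mathbf f\|_\nu$. The paper's proof simply records the verified values ($N=100$, $\nu=1.0001$, $r^*=10^{-6}$, $Y\approx 2.4\times10^{-8}$, $Z_1\approx 3.1\times10^{-2}$, $Z_2\approx 3.6\times10^4$) and reads off the radius $2.5197\times10^{-8}$.

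However, your treatment of the nonpolynomial term is \emph{not} the paper's; it is precisely the route the paper sets out to avoid. You propose the polynomial embedding: add $\psi=1/(1+v^9)$ as a third unknown with the algebraic constraint $\psi(1+v^9)=1$. The paper keeps only the two unknowns $(\mathbf u,\mathbf v)$ and instead handles $\gamma(\bar{\mathbf v})$, $\gamma'(\bar{\mathbf v})$, $\gamma''(\bar{\mathbf v})$ \emph{directly} in $\ell^1_\nu$: given a numerical $\mathbf a\approx(\mathbf 1+\bar{\mathbf v}^{*9})^{-1}$, a Neumann-series estimate (their Corollary~3.1/Theorem~3.1) yields a rigorous bound $\epsilon_{\gamma(\bar{\mathbf v})}$ on $\|\gamma(\bar{\mathbf v})-\mathbf a\|_\nu$, and this finite approximation plus error is carried as an additive correction through the derivation of $Y$, $Z_1$, $Z_2$. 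The payoff is twofold: the system stays two-dimensional, and the second-order structure $F=\Delta\Phi+R$ is preserved, so the tail of the approximate inverse $A$ is uniformly of the form $\mathrm{M}(\mathbf w^{ij})\Delta^{-1}$, which keeps the $Z_1$ tail estimate clean. Your embedding adds a purely algebraic row with no Laplacian, breaking this structure and forcing the ad hoc Neumann argument on the $\psi$-tail that you yourself flag as the main obstacle; the paper's point is that this Neumann step is better done once, up front, outside the fixed-point map. Your sentence that the embedding ``is exactly \dots\ what the abstract part of the paper is built to handle'' is therefore a misreading: the paper's contribution is the direct Banach-algebra calculus for quotients and compositions, not the embedding.

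One small slip: $1+x^9$ \emph{does} have a real root, at $x=-1$. The correct justification is that the validated enclosure keeps $v$ close to the positive $\bar v$, hence bounded away from $-1$; your parenthetical should be amended accordingly.
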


Notice that we not only get an existence statement regarding steady states of~\eqref{eq:syst_para}, but also very quantitative information regarding the obtained steady state, as we have an explicit approximation together with small error bounds.

\begin{rem}
    The error bound in Theorem~\ref{th:single_sol} is stated in the $C^0$ norm for the sake of simplicity, but is actually first obtained in a stronger norm, which will be introduced in Section~\ref{sec:NK}.
\end{rem}

\begin{figure}[ht]
    \centering
    \includegraphics[width=0.6\paperwidth]{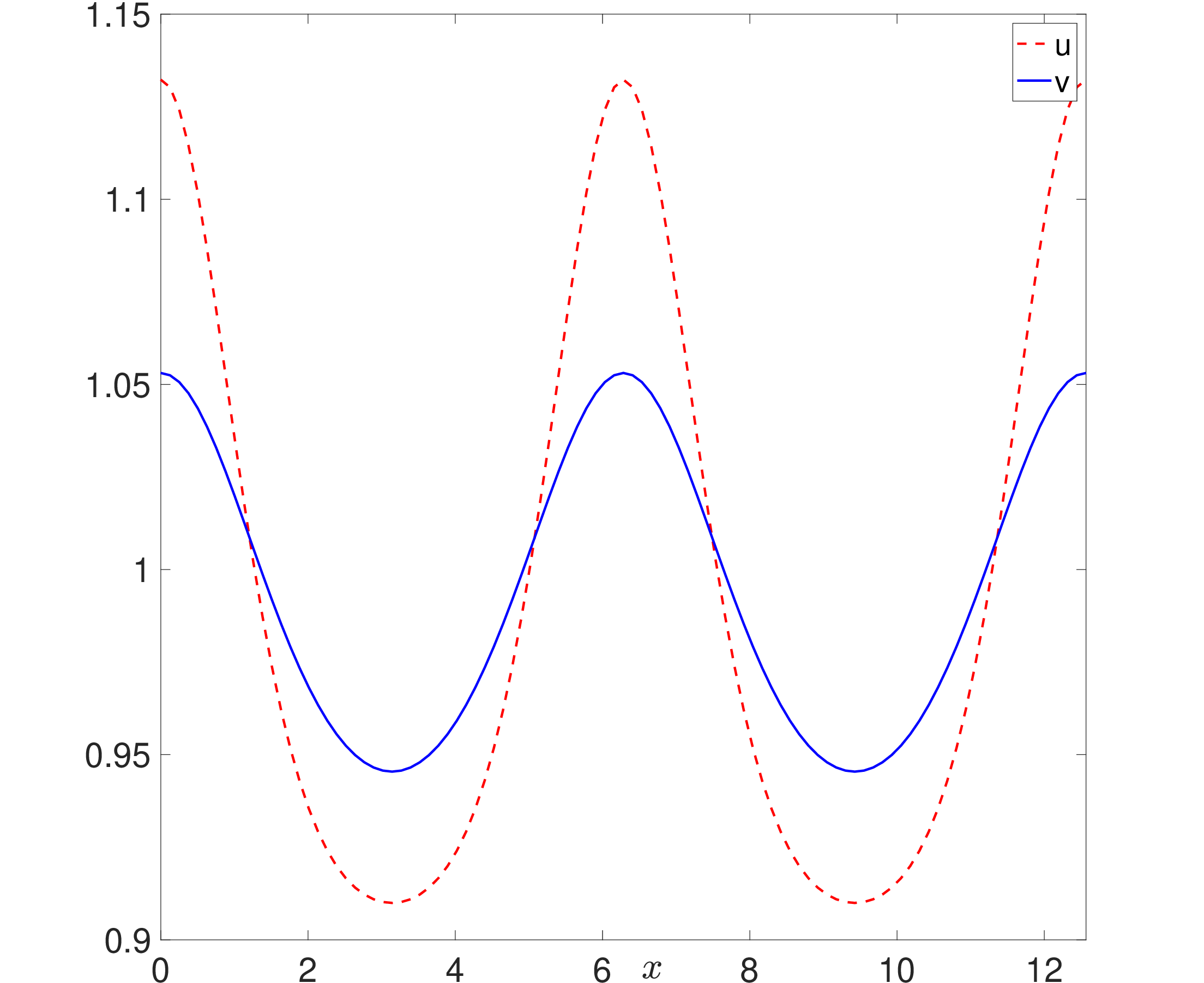}
    \caption{An approximate steady state of \eqref{eq:syst_para}, associated to Theorem~\ref{th:WX}.}
    \label{fig:thWX}
\end{figure}

Here is another example, for a different choice of motility function $\gamma$.
\begin{thm} \label{th:WX}
    Let $\bar{u},\bar{v}$ be the functions depicted on Figure~\ref{fig:thWX} and whose precise description in terms of Fourier coefficients can be found at \cite{GitHub}. There exists a smooth steady states $(u,v)$ of the system~\ref{eq:syst_para} with the parameters $\sigma = 0.6$, $d = 1$, $\Omega=(0,4\pi)$ and $\gamma(x) = \dfrac{1}{1+\e{9(x-1)}}$, such that
     \begin{align*}
         \sup_{x\in\Omega} |u(x)-\bar{u}(x)| + \sup_{x\in\Omega} |v(x)-\bar{v}(x)| \leq 1.6956\times 10^{-12}.
     \end{align*}
\end{thm}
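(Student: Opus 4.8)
The plan is to set up a Newton--Kantorovich fixed-point argument in a Banach space of rapidly decaying Fourier coefficients, exactly as for Theorem~\ref{th:single_sol}, but now with the nonlinearity coming from $\gamma(x)=1/(1+\e{9(x-1)})$ rather than $1/(1+x^9)$. First I would rewrite the steady state system for $(u,v)$ on $\Omega=(0,4\pi)$ with Neumann boundary conditions as a zero-finding problem $F(u,v)=0$ in the sequence space associated to the cosine-series expansion adapted to these boundary conditions; the first equation $\Delta(\gamma(v)u) + \sigma u(1-u)=0$ can be integrated once using $\partial_n(\gamma(v)u)=0$, and the second equation $d\Delta v + u - v = 0$ is linear and easily inverted on each Fourier mode. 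The key difficulty, and the reason this is a separate theorem rather than a corollary of the method used for Theorem~\ref{th:single_sol}, is that $\gamma$ is no longer a rational function of $v$: to handle $\gamma(v)u$ inside the Fourier framework one must bound the Fourier coefficients of $\gamma(\bar v)$ and of its derivatives, and control how the composition operator $v\mapsto\gamma(v)$ behaves on a neighborhood of $\bar v$ in the chosen norm. This is precisely the ``new way of dealing with some nonpolynomial nonlinearities'' advertised in the abstract, and I would invoke whatever composition/interval-arithmetic lemma the paper has established for this purpose in the earlier sections.

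Concretely, I would: (i) load the numerically computed Fourier coefficients of $(\bar u,\bar v)$ from \cite{GitHub} and fix a truncation dimension; (ii) construct an approximate inverse $A$ of $DF(\bar u,\bar v)$, built from the exact inverse of the truncated linearization together with the explicitly invertible tail (the tail being controlled because $d\Delta - \mathrm{Id}$ and the Laplacian with the integrated first equation are diagonal and coercive on high modes); (iii) define the fixed-point operator $T(w) = w - A F(w)$ and verify, using interval arithmetic, the three Newton--Kantorovich bounds: $\|T(\bar w)-\bar w\|_\nu \le Y$, $\|DT(\bar w)\|_{\mathcal B(\mathcal X)} \le Z_1$, and a second-order Lipschitz bound $\|DT(w)-DT(\bar w)\|_{\mathcal B(\mathcal X)} \le Z_2 r$ on a ball of radius $r$; (iv) check the contraction condition, e.g. that there exists $r>0$ with $Z_2 r^2 - (1-Z_1)r + Y \le 0$ and $Z_1 + Z_2 r < 1$. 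The estimate $Y$ will be small because $(\bar u,\bar v)$ is a very accurate numerical solution (hence the $10^{-12}$ in the statement), and $Z_1,Z_2$ will be comfortably below $1$ because $d=1$ and the diffusion/decay terms dominate.

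The main obstacle I expect is controlling the nonpolynomial term $\gamma(v) = 1/(1+\e{9(x-1)})$ rigorously in the sequence space. Unlike a rational nonlinearity, one cannot simply reduce to finitely many products of Fourier series; instead I would represent $\gamma(\bar v)$ via a rigorously enclosed truncated Fourier series plus an interval bound on the tail (obtained from analyticity/decay estimates or from a differential equation satisfied by $\gamma(\bar v)$), and then bound the needed derivatives $\gamma'(v), \gamma''(v)$ uniformly over the validation ball. The exponential with the large factor $9$ makes $\gamma$ vary sharply, so the numerical $\bar v$ will need enough modes and the tail estimates will need care; but once the composition lemma from the earlier sections is applied, the $Z_1, Z_2, Y$ bounds follow by the same interval-arithmetic bookkeeping as in Theorem~\ref{th:single_sol}. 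Finally, once a radius $r$ satisfying the contraction inequality is found (with $r$ translated from the $\|\cdot\|_\nu$ norm to the $C^0$ norm via the embedding $\|f\|_{C^0}\le \|f\|_\nu$ on $\Omega$), the existence of a true smooth steady state within the claimed distance $1.6956\times 10^{-12}$ of $(\bar u,\bar v)$ follows from the contraction mapping theorem, and smoothness follows from elliptic regularity since $\gamma$ is smooth and positive on the enclosed range of $v$.
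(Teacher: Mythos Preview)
Your overall plan---Newton--Kantorovich in the weighted $\ell^1_\nu$ space of cosine coefficients, an approximate inverse $A$, computable bounds $Y$, $Z_1$, $Z_2$, interval arithmetic, and the final $C^0$ estimate via $\|\cdot\|_{C^0}\le\|\cdot\|_\nu$---is exactly the paper's approach, and the paper's proof of Theorem~\ref{th:WX} is indeed verbatim the same as for Theorem~\ref{th:single_sol} with different data plugged in. Two technical points in your sketch deviate from what the paper actually does, and both matter.

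First, the paper does \emph{not} integrate the first equation. The zero-finding map is kept in second-order form $F(\textbf{U}) = \Delta\Phi(\textbf{U}) + R(\textbf{U})$, and crucially the tail of $A$ is \emph{not} diagonal: following~\cite{B22}, each tail block of $A$ is $\mathrm{M}(\textbf{w}^{ij})\Delta^{-1}$, where the $\textbf{w}^{ij}$ are chosen so that $\textbf{W}$ approximately inverts the multiplication matrix $\bigl(\begin{smallmatrix}\gamma(\bar{\textbf{v}}) & \gamma'(\bar{\textbf{v}})*\bar{\textbf{u}}\\ \textbf{0} & d\end{smallmatrix}\bigr)$ in the Banach algebra. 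This off-diagonal multiplicative structure is precisely what makes the $Z_1$ tail estimate close for a quasilinear problem; a purely diagonal tail would not work here.

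Second, the nonpolynomial $\gamma(v)=1/(1+e^{9(v-1)})$ is not handled via ``a differential equation satisfied by $\gamma(\bar v)$'' (that would be the automatic-differentiation/polynomial-embedding route the paper explicitly avoids). Instead the paper writes $g(\textbf{v})=\textbf{1}+e^{9(\textbf{v}-\textbf{1})}$, approximates the exponential by a finite Taylor sum in the Banach algebra $\ell^1_\nu$ with a computable remainder (Proposition~\ref{prop:entire_func}), and then inverts $g(\textbf{v})$ via a Neumann-series estimate around a numerical approximate inverse $\textbf{a}$ (Corollary~\ref{cor:inv}). The same recipe gives controlled approximations of $\gamma'(\textbf{v})$ and $\gamma''(\textbf{v})$, which feed into $Y$, $Z_1$, $Z_2$ exactly as in Section~\ref{sec:bounds}. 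Once those are computed ($Y\approx 1.53\times10^{-12}$, $Z_1\approx 2.43\times10^{-2}$, $Z_2\approx 6.48\times10^{2}$ with $N=100$, $\nu=1.0001$, $r^*=10^{-6}$), condition~\eqref{condNK} is checked and Theorem~\ref{th:NK} gives the result.
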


\begin{rem}
    Theorem~\ref{th:WX} corroborates the results of~\cite[Fig.3]{WX21}, where an approximate steady state resembling the one of our Figure~\ref{fig:thWX} was obtained numerically. The techniques presented in this paper allow to rigorously establish the existence of an exact solution near this approximation.
\end{rem}

We can of course repeat such an argument for different approximate solutions, and for different choices of parameters. Putting all these results together, we indeed obtain a rigorous description of the full picture, or at least of the full picture that was found numerically: we can essentially validate any solution that we found numerically, but cannot exclude the existence of other solutions that may have escaped our numerical investigation.  

\begin{figure}[ht]
    \centering
    \includegraphics[width=0.8\paperwidth]{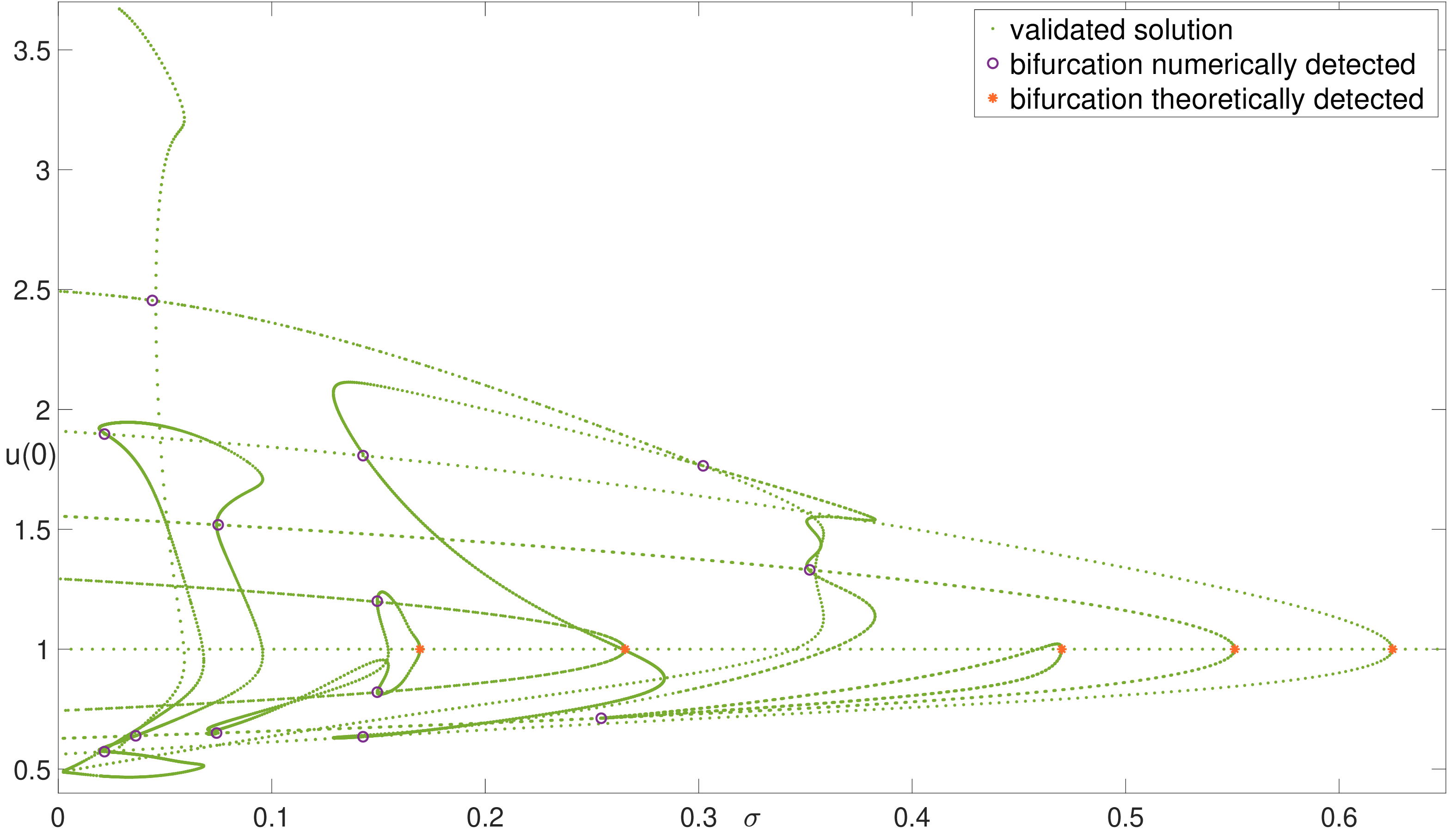}
    \caption{Validated steady states of the system \eqref{eq:syst_para}, with $\Omega=(0,3\pi)$, $d=1$ and $\gamma(x) = \dfrac{1}{1+x^9}$, associated to Theorem~\ref{th:bif_diag}}
    \label{fig:th-discrete-diag}
\end{figure}

 \begin{thm}
 \label{th:bif_diag}
    Each point of the Figure~\ref{fig:th-discrete-diag} represents a steady state for \eqref{eq:syst_para}, with $\Omega=(0,3\pi)$, $d=1$ and $\gamma(x) = \dfrac{1}{1+x^9}$, for the $\sigma$ plotted in x-coordinates.
 \end{thm}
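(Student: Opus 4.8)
The plan is to treat each of the finitely many points of Figure~\ref{fig:th-discrete-diag} exactly as the isolated solutions handled in Theorems~\ref{th:single_sol} and~\ref{th:WX}: every point is simply a value of the bifurcation parameter $\sigma$ attached to a numerically computed approximate steady state $(\bu,\bv)$ lying on one of the branches of the diagram, and for each such $(\bu,\bv)$ we run the computer-assisted validation scheme to produce an exact steady state of~\eqref{eq:syst_para} nearby, hence in particular an exact steady state for that $\sigma$. Since the claim is a finite conjunction of such existence statements, it suffices to carry out the validation successfully for each candidate in the list.

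The first step is numerical: starting from the homogeneous state $(1,1)$ and the bifurcation thresholds detected via~\eqref{eq:lin_stab}, use pseudo-arclength continuation to trace the branches of nonhomogeneous steady states, truncating the Fourier expansions of $u$ and $v$ at a finite order $N$, and sample each branch at finitely many parameter values. This yields a finite list of candidate pairs $(\bu,\bv)$ with explicitly stored Fourier coefficients (the data being archived at~\cite{GitHub}). The second step is to validate each candidate by the fixed-point argument of Section~\ref{sec:NK}: recast the steady-state equations as a zero-finding problem $F(u,v)=0$ on the Banach space of Fourier sequences equipped with the weighted $\nu$-norm, build an approximate inverse $A$ of $DF(\bu,\bv)$ from the numerically computed Jacobian, and verify rigorously, with interval arithmetic, the bounds $Y$ (on $\|AF(\bu,\bv)\|$) and $Z(r)$ (controlling $\|I-A\,DF\|$ on a ball of radius $r$ and the nonlinear remainder, which is where the new treatment of the nonpolynomial nonlinearity $\gamma$ announced in the abstract is needed) so that the associated Newton--Kantorovich / radii-polynomial inequality admits a solution $r>0$. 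Each successful run then gives an exact steady state within distance $r$ of $(\bu,\bv)$, contributing one point to Figure~\ref{fig:th-discrete-diag}; collecting all of them proves the theorem.

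The main obstacle I expect is not any single validation but the points sitting close to fold points or secondary bifurcation points of the diagram, where $DF(\bu,\bv)$ is nearly singular: there the approximate inverse $A$ has large norm, the $Z$ bound degrades, and the radii polynomial may fail to have an admissible root. The remedy is to keep the sampled parameter values away from such singular loci, to increase the truncation order $N$ and the accuracy of the precomputed $(\bu,\bv)$ where needed, and, for the genuinely degenerate points, to resort to a desingularized formulation (e.g. augmenting the unknowns so that the linearization is again invertible). A secondary, purely technical difficulty is that the estimates — especially those for the nonpolynomial $\gamma$ — must be explicit and robust enough to be applied uniformly and automatically across the whole list of candidates, rather than tuned solution by solution; once Section~\ref{sec:NK} provides such estimates, the proof of Theorem~\ref{th:bif_diag} reduces to reporting the outcome of the corresponding interval-arithmetic computations.
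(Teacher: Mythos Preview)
Your proposal is correct and matches the paper's approach: the proof of Theorem~\ref{th:bif_diag} indeed loops over all stored approximate solutions, applies the same Newton--Kantorovich validation as in Theorems~\ref{th:single_sol} and~\ref{th:WX} (computing $Y$, $Z_1$, $Z_2$ and checking~\eqref{condNK}), and reports success for each point. The only minor discrepancy is that you anticipate needing a desingularized formulation near fold or bifurcation points, whereas the paper simply avoids sampling there (cf.\ Remark~1.4, which explicitly places desingularization beyond the scope of the work) and handles any failed validation by increasing the truncation level~$N$.
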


\begin{rem}
    While it is beyond the scope of this work, let us mention that one may try to go one step beyond Theorem~\ref{th:bif_diag}, and really validate the continuous bifurcation diagram depicted in Figure~\ref{fig:continuous-diag}, instead of only the discreteized version of Figure~\ref{fig:th-discrete-diag}. This would require combining the point-wise (in the parameters) theorems obtained here with two extra ingredients. First, rigorous continuation techniques (see e.g.~\cite{Bre23,BreLesVan13,BerLesMis10,Wan18}), in order to validate entire branches of steady states. Second, a desingularization procedure allowing to handle the neighborhood of the pitchfork bifurcation points (see e.g.~\cite{AriGazKoc21,LesSanWan17,RizSanWan23}).
\end{rem}

Computer-assisted techniques for studying elliptic PDEs go back to the pioneering works of Nakao~\cite{Nak88} and Plum~\cite{Plu92}, and have been more and more widely used since then~\cite{AriKoc10,DayLesMis07,Gom19,NakPluWat19,Ois95,TakLiuOis13,BerWil17,Yam98,ZglMis01}. However, their applicability was limited to semilinear problems until very recently, and we will deal with the quasilinear structure of~\eqref{eq:syst_para} by using the techniques introduced in~\cite{B22}. 

Several of our estimates are based on Fourier series decomposition of functions, which has proven to be a very efficient framework for conducting computer-assisted proofs. However, this means that~\eqref{eq:syst_para} present an additional difficulty, namely the fact that the typically $\gamma$'s we are interested in cannot be polynomial (from a modeling point of view, reasonable choices of $\gamma$ should go to $0$ at $+\infty$). In the context of computer-assisted proofs using Fourier series, nonpolynomial terms are usually dealt with with tools based on automatic differentiation or polynomial embeddings~\cite{Hen21,LesMirRan16}, which have proven successful but still suffer from several shortcomings. In particular, they are usually limited to one-dimensional problems, they require enlarging the system with additional equations and unknowns, and for elliptic equations like the steady state problem associated to~\eqref{eq:syst_para}, they destroy the second order structure of the system, which in turn makes the computer-assisted proof harder. In this work, we develop an alternate approach to deal with several types of nonlinear terms, including rational functions, exponential functions, and compositions thereof, which does not necessitate the introduction of new variables and preserves the second order structure of the system.

The remainder of the paper is organized as follows. In Section~\ref{sec:NK}, we introduce some notations and spaces in which we will study the problem, together with a version of the Newton-Kantorovich theorem that will allow us to prove the existence of an exact solution near a numerical approximation. In Section~\ref{sec:tool}, we explain how to deal with the nonpolynomial nonlinearities $\gamma$ considered in this work. The bounds required to apply the Newton-Kantorovich theorem are then obtained in Section~\ref{sec:bounds}. Finally, Section~\ref{sec:proofs} contains some details regarding the proofs of Theorem~\ref{th:single_sol}, Theorem~\ref{th:WX} and Theorem~\ref{th:bif_diag}. All the computations presented in this paper have been implemented in \textsc{Matlab}, using the \textsc{Intlab} package~\cite{intlabRump} for interval arithmetic. The computer-assisted parts of the proofs can be reproduced using the code available at~\cite{GitHub}.

\section{Zero-finding problem and the Newton-Kantorovitch Theorem} \label{sec:NK}
In this section, we introduce the first useful notations to start building a bridge between the numerical aspects (to get numerical solutions) and the theoretical ones (to obtain theorems). To that end, we rewrite our problem as a zero-finding problem, for a single function $F$ defined on a well chosen Banach space. Then, we 
introduce the Newton-Kantorovitch, Theorem~\ref{th:NK}, and its hypotheses. This theorem is the tool which we rely on to show the existence of solutions. 

\subsection{Theoretical framework and associated notations}
Let $u$ be a smooth function defined on $\Omega = (a,b) \subset \mathbb{R}$, $a < b$. We will describe all the objects useful for our demonstration. Since we are in dimension one with homogeneous Neumann conditions at the boundary, we look for solutions as Fourier cosine series, i.e.
\begin{equation*}
     u(x) = \textbf{u}_0 + 2\sum_{n=1}^{+\infty} \textbf{u}_n \cos(\dfrac{n\pi}{b-a}\, (x-a))  .
\end{equation*}
For any smooth function $u$, we denote by $\textbf{u}$ the sequence of its Fourier coefficients $(\textbf{u}_n)_{n\in\mathbb{N}}$. In particular, it will help to use $\textbf{0} = (0,0, \dots )$ and $\textbf{1} = (1, 0, \dots)$. We denote by $\mathcal{F}$ the Fourier transform, i.e. the map such that $\mathcal{F}(u) = \textbf{u}$. 

In order to estimate our objects in the right spaces, we have to choose a norm. We take a weighted $\ell^1$ norm with exponential decay on our sequences, i.e., for any $\nu\geq 1$,
\begin{align*}
    \normenu{\textbf{u}} &= |\textbf{u}_0| + 2\sum_{n=1}^{+\infty}|\textbf{u}_n|\nu^n = \sum_{n=0}^{+\infty} |\textbf{u}_n| \xi_n(\nu), \\
\intertext{where}
\xi_n(\nu) &= \left\lbrace \begin{array}{ll} 1 ,& \text{ if } n=0, \\ 2\nu^n ,& \text{ otherwise.} \end{array} \right.
\intertext{The Banach space we choose is}
\ell^1_\nu &= \left\lbrace \textbf{u} \in \mathbb{R}^\mathbb{N} \, , \, \normenu{\textbf{u}} < \infty \right\rbrace .
\end{align*}

This space represents smooth functions, as soon as $\nu>1$. There is a clear correspondence between the space $\ell^1_\nu$ and a functional space of analytic functions (see e.g.~\cite[p.98]{ZQ}). The Banach space $\ell^1_\nu$ is furthermore a (unital commutative) Banach algebra with the ``product'' $*$, the usual discretee convolution,
\begin{align*}
    (\textbf{u}*\textbf{v})_n &= \sum_{k\in\mathbb{Z}} \textbf{u}_{|k|}\textbf{v}_{|n-k|}, \\
    \intertext{and $\normenu{\cdot}$ is an algebraic norm,}
    \normenu{\textbf{u}*\textbf{v}} &\leq \normenu{\textbf{u}}\,\normenu{\textbf{v}}, \ \normenu{\textbf{1}}=1\, .
\end{align*}

\begin{rem}
    For $\nu < 1$, the space $\ell^1_\nu$ is no longer a Banach algebra.
\end{rem}

Let $L\in\mathcal{L}(\ell^1_\nu)$ a linear operator. It can be seen as a \emph{infinite matrix} $(L_{k,n})_{(k,n)\in\mathbb{N}^2}$ where \begin{equation*}
(L\textbf{u})_k = \sum_{n=0}^{+\infty}L_{k,n}\textbf{u}_n .\end{equation*}
The associated operator norm on $\mathcal{L}(\ell^1_\nu)$ can be expressed as follows,
\begin{equation*}
    \normenu{L} := \underset{\normenu{\textbf{u}} = 1}{\sup} \normenu{L\textbf{u}} =  \underset{n\in\mathbb{N}}{\sup} \dfrac{1}{\xi_n(\nu)} \normenu{L_{(\cdot,n)}}.
\end{equation*}
$L_{(\cdot,n)}$ corresponds to the $n$-th column vector of $L$, from a matrix point of view.

We denote by $\convomat{\textbf{u}}\in\mathcal{L}(\ell^1_\nu)$ the multiplication operator associated to the convolution by $\textbf{u}$, i.e.
\begin{align*}
     \convomat{\textbf{u}} \ :\  \ell^1_\nu &\longrightarrow \ell^1_\nu \\
     \textbf{v} &\longmapsto \textbf{u}*\textbf{v} 
\end{align*}
Notice that $\normenu{\convomat{\textbf{u}}} = \normenu{\textbf{u}}$, and that $ \|u\|_{\mathcal{C}^0} := \underset{x\in[a,b]}{\sup}|u(x)| \leq \normenu{\textbf{u}} $.

We also define the norm of a vector $(\textbf{u},\textbf{v})\in \ell^1_\nu \times \ell^1_\nu$:
\begin{equation*}
    \normenu{(\textbf{u},\textbf{v})} = \normenu{\textbf{u}} + \normenu{\textbf{v}} .
\end{equation*}
In the sequel, we also have to deal with linear operators $L$ acting on $\ell^1_\nu \times \ell^1_\nu$, for which we use a block-notation:
\begin{align*}
    L = \left( \begin{array}{c|c}
    L^{11} & L^{12}\\ \hline
    L^{21} & L^{22}
\end{array} \right), \text{ with }
    L^{ij} \in \mathcal{L}(\ell^1_\nu), \ i,j \in \{ 1,2 \}.
\end{align*}
The corresponding operator norm can then be expressed as follows:
\begin{align*}
    \normenu{L}&= \underset{ (\ell^1_\nu)^2\ni (\textbf{u},\textbf{v})\neq (\textbf{0},\textbf{0})}{\sup} \dfrac{\normenu{L^{11}\textbf{u}+L^{12}\textbf{v}} + \normenu{L^{21}\textbf{u}+L^{22}\textbf{v}}}{\normenu{\textbf{u}}+\normenu{\textbf{v}}} \\
    &=\sup_{n\in\mathbb{N}} \dfrac{1}{\xi_k(\nu)} \max \Big( \normenu{L^{11}_{(\cdot,n)}}+\normenu{L^{21}_{(\cdot,n)}}, \, \normenu{L^{12}_{(\cdot,n)}}+\normenu{L^{22}_{(\cdot,n)}} \Big).
\end{align*}

We need the Laplacian $\Delta$ and its inverse $\Delta^{-1}$ on Fourier sequences. The sequences $\Delta \textbf{v}$ and $\Delta^{-1}\textbf{v}$ are defined as follow :
\begin{align*}
    &\left(\Delta \textbf{v}\right)_n = \left(-\left(\dfrac{n\pi}{b-a}\right)^2\textbf{v}_n\right)_n,\quad n\geq 0, \\
    &\left(\Delta^{-1}\textbf{v}\right)_n = \left\lbrace \begin{array}{ll}
    0, &n=0, \\
    -\left(\dfrac{b-a}{n\pi}\right)^2\textbf{v}_n, &n\geq 1. \end{array}\right.
\end{align*}

\hfill

Finally, we will slightly abuse notation and denote by $\gamma(\textbf{v})$ the sequence of Fourier's coefficients of the function $\gamma\circ v$. In other words, $\gamma(\textbf{v}) = \mathcal{F}(\gamma \circ \mathcal{F}^{-1}(\textbf{v}))$. We insist on this way of looking at $\gamma$ to show that we want to express $\gamma(\textbf{v})$ in terms of $\textbf{v}$. This is the whole purpose of Section~\ref{sec:tool}.

\begin{rem}
    We do not need to assume a priori that $\gamma(\textbf{v})$ is well defined for an arbitrary element $\textbf{v}$ in $\ell^1_\nu$, but we will obtain as a byproduct of our proof that $\gamma(\textbf{v})$ is indeed well defined, and belongs to $\ell^1_\nu$, for all $\textbf{v}$ in a neighborhood of the approximate solutions that have been validated.
\end{rem}

\subsection{From the PDE to the zero-finding problem}
We use our notations and operations to rewrite the steady state problem associated to system \eqref{eq:syst_para}, namely
\begin{equation} \label{eq:syststat}
    \left\{\begin{array}{l l l l r}
        \Delta(\gamma(v)u) & + & \sigma u (1-u) &= 0,& \quad \text{ in } (a,b), \\
        d\Delta v & + & u-v &= 0,& \quad \text{ in } (a,b), \\
        \partial_n u = 0 ,  & &\partial_n v = 0,& & \text{ on } \{a,b\}. \\
    \end{array} \right .
\end{equation}
Thanks to the Fourier's transform, we can write directly the system \eqref{eq:syststat} in Fourier space: 
\begin{equation}\label{eq:Fourier1}
    \left\{\begin{array}{llll}
        \Delta \left(\gamma(\textbf{v)}*\textbf{u}\right) &+& \sigma \textbf{u}* (\textbf{1}-\textbf{u}) &= \textbf{0}, \\
        d\Delta \textbf{v} &+& \textbf{u}-\textbf{v} &= \textbf{0}.
    \end{array} \right .
\end{equation}
Then, let us denote
\begin{align*}
    & \textbf{U} = \begin{pmatrix} \textbf{u} \\ \textbf{v} \end{pmatrix}, \quad
    F(\textbf{U}) = \begin{pmatrix} \Delta & 0 \\ 0 & \Delta \end{pmatrix} \Phi(\textbf{U}) + R(\textbf{U}),
\intertext{with}
    & \Phi\begin{pmatrix} \textbf{u} \\ \textbf{v} \end{pmatrix} = \begin{pmatrix} \gamma (\textbf{v}) * \textbf{u} \\ d \textbf{v} \end{pmatrix}, \quad
    R\begin{pmatrix} \textbf{u} \\ \textbf{v} \end{pmatrix} = \begin{pmatrix} \sigma(\textbf{u}-\textbf{u}*\textbf{u}) \\ \textbf{u}-\textbf{v} \end{pmatrix}.
\end{align*} 
Thus, the system \eqref{eq:Fourier1} can be written as
\begin{equation}\label{eq:Fourier2}
    F(\textbf{U}) = \textbf{0} \,.
\end{equation}

So, with this formalism and with $\nu > 1$, a $\ell^1_\nu$-solution $\textbf{U}=(\textbf{u}, \textbf{v})$ of \eqref{eq:Fourier2} gives a $C^{\infty}$-solution of \eqref{eq:syststat}.

\subsection{Newton-Kantorovitch Theorem}

We have to solve the system $F(\textbf{U}) = 0$.
In order to prove the existence of a zero $F$, we use a fixed point method based on the Newton-Kantorovitch Theorem, for which we follow~\cite{B22}. There are other versions and applications of the Newton-Kantorovicth Theorem, see for instance \cite{AriKochTer05,CalRap97,DayLesMis07,Plu01,BreLesVen21,Yam98}.

Let $N \in \mathbb{N}$ and $\overline{\textbf{U}} = (\bar{\textbf{u}}, \bar{\textbf{v}}) \in (\ell^1_\nu)^2$ be an approximation of a stationary solution, this is our starting point for the fixed point method. Practically, $\overline{\textbf{U}}$ is a finite sequence of Fourier coefficients obtained numerically. Let $A$ be an approximate inverse of $DF(\overline{\textbf{U}})$, the Fréchet derivative of $F$ at $\overline{\textbf{U}}$. It will be explicitly built and discussed in Section~\ref{sec:bounds}.

\begin{thm}[Newton-Kantorovich Theorem]\label{th:NK} \emph{(See \cite[Theorem 2.5]{B22}).} 
With the notations introduced in this section, $r^*>0$ and $\nu>1$, assume there exist constants $Y,\, Z_1,\,\text{and } Z_2 $ satisfying :
\begin{subequations} \label{boundall}
\begin{align}
    &\normenu{AF(\myol{\textbf{U}})} \leq Y, \label{boundY} \\ 
    &\normenu{I - ADF(\myol{\textbf{U}})} \leq Z_1, \label{boundZ1} \\ 
    &\normenu{AD^2F(\textbf{U})} \leq Z_2, \quad \forall \textbf{U}\in\mathcal{B}_\nu(\myol{\textbf{U}},r^*),  \label{boundZ2}
\end{align}
\end{subequations}
and
\begin{subequations} \label{condNK}
\begin{align}
    &Z_1 < 1, \label{condNKa} \\ 
    &2YZ_2 < (1-Z_1)^2. \label{condNKb}
\end{align}
\end{subequations}
Then, for any $r$ satisfying
\begin{equation} \label{condNKc}
\dfrac{1-Z_1-\sqrt{(1-Z_1)^2-2YZ_2}}{Z_2} \leq r < \min(r^*, \dfrac{1-Z_1}{Z_2})\, ,
\end{equation}
there exists a unique fixed-point $\textbf{U}^*$ of $\, \textbf{U} \longmapsto I-AF(\textbf{U})$ in $\mathcal{B}_\nu(\myol{\textbf{U}},r)$ the closed ball of radius $r$ centered at $\myol{\textbf{U}}$ in $\ell^1_\nu \times \ell^1_\nu$. If moreover, $A$ is injective, then $\textbf{U}^*$ is a solution of $F(\textbf{U}) = 0$. The functions $(u^*,v^*)$, described by the vector of Fourier coefficients $(\textbf{u}^*,\textbf{v}^*)$, are then smooth solutions of \eqref{eq:syststat}.
\end{thm}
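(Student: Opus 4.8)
The plan is to recast $F(\textbf{U})=\textbf{0}$ as a fixed-point equation for the Newton-like operator $T(\textbf{U}) := \textbf{U} - AF(\textbf{U})$ and to check that, for every $r$ in the range~\eqref{condNKc}, $T$ maps the closed ball $\mathcal{B}_\nu(\myol{\textbf{U}},r)$ into itself and is a contraction there. Since a closed ball of the Banach space $\ell^1_\nu\times\ell^1_\nu$ is a complete metric space, the Banach fixed-point theorem then produces the unique fixed point $\textbf{U}^*\in\mathcal{B}_\nu(\myol{\textbf{U}},r)$; the identity $\textbf{U}^* = \textbf{U}^* - AF(\textbf{U}^*)$ forces $AF(\textbf{U}^*)=\textbf{0}$, so that $F(\textbf{U}^*)=\textbf{0}$ as soon as $A$ is injective, and the smoothness of the corresponding $(u^*,v^*)$ is the correspondence already recorded after~\eqref{eq:Fourier2}.

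For the self-map property, fix $\textbf{U}\in\mathcal{B}_\nu(\myol{\textbf{U}},r)$ and set $\textbf{H}=\textbf{U}-\myol{\textbf{U}}$. A first-order Taylor expansion of $AF$ with integral remainder yields
\begin{equation*}
    T(\textbf{U})-\myol{\textbf{U}} = -AF(\myol{\textbf{U}}) + \big(I-ADF(\myol{\textbf{U}})\big)\textbf{H} + \int_0^1\big(ADF(\myol{\textbf{U}})-ADF(\myol{\textbf{U}}+t\textbf{H})\big)\textbf{H}\,\mathrm{d}t .
\end{equation*}
By convexity the segment $\{\myol{\textbf{U}}+t\textbf{H}: t\in[0,1]\}$ stays in $\mathcal{B}_\nu(\myol{\textbf{U}},r)\subset\mathcal{B}_\nu(\myol{\textbf{U}},r^*)$ (here the constraint $r<r^*$ is used), so the mean value inequality combined with~\eqref{boundZ2} gives $\normenu{ADF(\myol{\textbf{U}})-ADF(\myol{\textbf{U}}+t\textbf{H})}\leq Z_2\,t\,\normenu{\textbf{H}}$; together with~\eqref{boundY} and~\eqref{boundZ1} this produces $\normenu{T(\textbf{U})-\myol{\textbf{U}}}\leq Y + Z_1 r + \tfrac{1}{2} Z_2 r^2$. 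Using~\eqref{condNKa} and~\eqref{condNKb}, the quadratic $q(r):=\tfrac{1}{2} Z_2 r^2 - (1-Z_1)r + Y$ has two positive real roots $\frac{1-Z_1\pm\sqrt{(1-Z_1)^2-2YZ_2}}{Z_2}$, is nonpositive between them, its smaller root is exactly the left endpoint of~\eqref{condNKc}, and its larger root is $\geq\frac{1-Z_1}{Z_2}$; hence $Y + Z_1 r + \tfrac{1}{2} Z_2 r^2\leq r$ for every $r$ in the range~\eqref{condNKc}, i.e. $T(\mathcal{B}_\nu(\myol{\textbf{U}},r))\subset\mathcal{B}_\nu(\myol{\textbf{U}},r)$.

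For the contraction estimate, take $\textbf{U}_1,\textbf{U}_2\in\mathcal{B}_\nu(\myol{\textbf{U}},r)$; the same Taylor argument gives
\begin{equation*}
    T(\textbf{U}_1)-T(\textbf{U}_2) = \big(I-ADF(\myol{\textbf{U}})\big)(\textbf{U}_1-\textbf{U}_2) + \int_0^1\big(ADF(\myol{\textbf{U}})-ADF(\textbf{U}_2+t(\textbf{U}_1-\textbf{U}_2))\big)(\textbf{U}_1-\textbf{U}_2)\,\mathrm{d}t,
\end{equation*}
and since $\normenu{\myol{\textbf{U}}-\textbf{U}_2-t(\textbf{U}_1-\textbf{U}_2)}\leq (1-t)\normenu{\myol{\textbf{U}}-\textbf{U}_2}+t\normenu{\myol{\textbf{U}}-\textbf{U}_1}\leq r$, bounds~\eqref{boundZ1} and~\eqref{boundZ2} give $\normenu{T(\textbf{U}_1)-T(\textbf{U}_2)}\leq(Z_1+Z_2 r)\normenu{\textbf{U}_1-\textbf{U}_2}$. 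Because $r<\frac{1-Z_1}{Z_2}$ on the range~\eqref{condNKc}, the Lipschitz constant $Z_1+Z_2 r$ is $<1$, so $T$ is a contraction of the complete metric space $\mathcal{B}_\nu(\myol{\textbf{U}},r)$, and the Banach fixed-point theorem applies, which completes the argument.

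Once $F$ is known to be twice continuously Fr\'echet differentiable with the advertised bounds, the reasoning above is entirely standard — so the genuine difficulty is not the abstract fixed-point step but what surrounds it: giving a rigorous meaning to $\gamma(\textbf{v})$ and hence to $D^2F$, and establishing the uniform bound~\eqref{boundZ2} over a ball of radius $r^*$ (Section~\ref{sec:tool}), together with the explicit construction of $A$ and the effective evaluation of $Y$ and $Z_1$ in~\eqref{boundY}–\eqref{boundZ1} (Section~\ref{sec:bounds}). That is where I expect the main obstacle to lie.
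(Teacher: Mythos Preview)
Your argument is correct and is precisely the standard contraction-mapping proof one expects here; the paper itself does not give a proof of Theorem~\ref{th:NK} but refers to \cite[Theorem~2.5]{B22}, whose proof follows exactly the route you take (define $T(\textbf{U})=\textbf{U}-AF(\textbf{U})$, use~\eqref{boundY}--\eqref{boundZ2} to get the self-map bound $Y+Z_1 r+\tfrac12 Z_2 r^2\leq r$ and the Lipschitz bound $Z_1+Z_2 r<1$ on $\mathcal{B}_\nu(\myol{\textbf{U}},r)$, then invoke Banach's fixed-point theorem). Your closing remark is also on point: the paper explicitly treats the abstract fixed-point step as routine and devotes its effort to the surrounding issues --- making sense of $\gamma(\textbf{v})$ (Section~\ref{sec:tool}), constructing $A$ so that $AF$ and its derivatives are bounded on $\ell^1_\nu\times\ell^1_\nu$ despite $F$ containing $\Delta$, and deriving computable $Y$, $Z_1$, $Z_2$ (Section~\ref{sec:bounds}).
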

Thus, $\textbf{U}^*$ is a theoretical solution of our problem, and we have a numerical description $\overline{\textbf{U}}$ with a known margin of error. The proof of this theorem, for the specific choice of $A$ described in Section~\ref{sec:bounds}, can be found in \cite{B22}. 

Following the statement of Theorem~\ref{th:NK}, our work in the next sections will be to exhibit bounds $Y$, $Z_1$ and $Z_2$. The expression of each bound must be computable, and accurate enough to satisfy the conditions~\eqref{condNK} of Theorem~\ref{th:NK}. In Section~\ref{sec:tool}, we describe the tools and initial results needed to that end. In Section~\ref{sec:bounds}, we then derive each of the bounds satisfying conditions~\eqref{boundall}.

\section{Approximation tools and technical lemmas} \label{sec:tool}
We now have to work out how we fit into the assumptions and statements of Theorem~\ref{th:NK}. In this section, we introduce the notations and the concepts for matching theoretical and numerical objects. Then, to verify the hypotheses of the Newton-Kantorovitch Theroem~\ref{th:NK}, we need to explore those ideas and compute some preliminary results. In particular, we have to control the terms linked to $\gamma$ and show how $\gamma$ is seen in $\ell^1_\nu$.

\subsection{Notations for the computations of approximations}
\label{sec:notations_approx}

The following notations are motivated by the finiteness of the computer, which can only handle finite sequences. 
Let $N \in \mathbb{N}$, we denote by $\ell^1_{\nu,N}$ the subspace that contains finite sequences of size $N$. 
More precisely, $\textbf{u}\in\ell^1_\nu$ belongs to $\ell^1_{\nu,N}$ if and only if $\textbf{u}_n = 0$ for all $n>N$. 
For any $\textbf{u}$ in $\ell^1_\nu$, we denote by $\vectrunc[\textbf{u}]{N}$ its truncation in $\ell^1_{\nu,N}$, that is,
\begin{align*}
    (\vectrunc[\textbf{u}]{N})_n = \left\{
    \begin{aligned}
        &\textbf{u}_n \qquad &n\leq N,\\
        &0 \qquad &n>N.
    \end{aligned}
    \right.
\end{align*}
Since $\ell^1_{\nu,N}$ is not stable by convolution, we will have to adapt $N$ when we need to remove truncation errors, using that 
\begin{equation*}
    \textbf{u}\in\ell^1_{\nu,N_1},\ \textbf{v}\in\ell^1_{\nu,N_2} \implies \textbf{u}*\textbf{v} \in \ell^1_{\nu,N_1+N_2}. 
\end{equation*}
\begin{rem}
    We will keep in mind that $\ell^1_{\nu,N}$ is isomorphic to $\mathbb{R}^{N+1}$, indeed we start counting from $0$ in the sequences.
\end{rem}

In practice, while it is fine to use $\ell^1_{\nu,N}$ spaces for constructing approximate solutions, when conducting the proof we will have to rigorously control quantities like $\gamma(\textbf{v})$, which, for typical $\gamma$'s of the form~\eqref{eq:examples_gamma}, will no belong to any $\ell^1_{\nu,N}$ space even if $\textbf{v}$ does. In principle, the truncation operator introduced above provides a natural way to approximate an arbitrary element $\textbf{v}$ of $\ell^1_\nu$ by one which can be stored and manipulated on the computer, but in practice it requires exact knowledge of the first coefficients $\gamma(\textbf{v})_n$ for $n\leq N$, and an explicit error bound for the truncation, which are not always easy to obtain. 
Therefore, we will introduce different ways of practically approximating elements of $\ell^1_\nu$, and specifically those of the form $\gamma(\textbf{v})$, for a class of functions $\gamma$, and assuming $\textbf{v}$ is an element of $\ell^1_\nu$ on which we have some control. In the sequel, whenever $\textbf{v}$ is an element of $\ell^1_\nu$, $\vecapp{\textbf{v}}$ denotes an element of $\ell^1_{\nu,N}$ (for some $N$ which we will not always specify), which is meant to approximate $\textbf{v}$. We also denote by $\vecdiff{\textbf{v}}$ the corresponding error, namely $\vecdiff{\textbf{v}} = \textbf{v}-\vecapp{\textbf{v}}$, and by $\vecbound{\textbf{v}}$ an explicit error bound, i.e. a computable number such that  $\normenu{\vecdiff{\textbf{v}}} \leq \vecbound{\textbf{v}}$. Practically, we adapt our choice of $\vecapp{\textbf{v}}$ to the problem at hand, as described in the remainder of this section.
\begin{rem} \label{rem:trunc}
In practice, depending on the procedure we use to obtain an approximation $\vecapp{\textbf{v}}$ and an error bound $\vecbound{\textbf{v}}$, the $N$ for which $\vecapp{\textbf{v}}$ belongs to $\ell^1_{\nu,N}$ might be quite large. This can be detrimental if we are to re-use $\vecapp{\textbf{v}}$ in other computations. However, we can easily replace $\vecapp{\textbf{v}}$ by another approximation having a truncation level $N'<N$:
\begin{align*}
    \textbf{v} = \vectrunc[\vecapp{\textbf{v}}]{N'} + \vecapp{\textbf{v}}-\vectrunc[\vecapp{\textbf{v}}]{N'} + \vecdiff{\textbf{v}},
\end{align*}
where $\vectrunc[\vecapp{\textbf{v}}]{N'}$ becomes our new approximation, and the new error can be bounded by
\begin{align*}
    \underbrace{\normenu{\vecapp{\textbf{v}}-\vectrunc[\vecapp{\textbf{v}}]{N'}}}_{\text{computable}} + \vecbound{\textbf{v}}.
\end{align*}
\end{rem}
We extend the notion of truncation to operators on $\mathcal{L}(\ell^1_\nu)$.
Given an operator $L$ in $\mathcal{L}(\ell^1_\nu)$, we denote by $\vectrunc[L]{N}$ its restriction to $\ell^1_{\nu,N}$, both in terms of domain and co-domain. That is, for any $\textbf{v}$ in $\ell^1_{\nu,N}$, $\left(\vectrunc[L]{N}\right) \textbf{v} = \vectrunc[\left(L\textbf{v}\right)]{N}$. 

The restriction to $N$ extends to the operators of $\mathcal{L}(\ell^1_\nu \times \ell^1_\nu)$ in a natural way, i.e.
\begin{align*}
    \vectrunc[L]{N} = \left( \begin{array}{c|c}
    \vectrunc[L^{11}]{N} & \vectrunc[L^{11}]{N}\\ \hline
    \vectrunc[L^{21}]{N} & \vectrunc[L^{22}]{N}
\end{array} \right).
\end{align*}

\subsection{How to compute in the Banach algebra with errors ?} \label{sec:compute}

Let us now focus on the main difficulty of this paper, namely controlling $\gamma(\textbf{v})$ for functions $\gamma$ of the form: $\gamma(x) = \dfrac{f(x)}{g(x)}$, $x \in \Omega \subset \mathbb{R}$. We assume that $f$ and $g$ are analytic, but $g$ may vanish. We propose some methods to determine an approximate element $\vecapp{\gamma(\textbf{v})}$ of $\gamma(\textbf{v})$, and a corresponding error bound $\vecbound{\gamma(\textbf{v})}$, for any $\textbf{v}$ belonging to $\ell^1_\nu$ with a finite approximation $\vecapp{\textbf{v}}\in\ell^1_{\nu,N}$ and an associated error bound $\vecbound{\textbf{v}}$.
We will use the propositions below in two special ways in the context of Theorem~\ref{th:NK}, where we choose $\vecapp{\textbf{v}}=\bar{\textbf{v}}$, a numerical solution of \eqref{eq:syst_para}: 
\begin{itemize}
    \item Assuming $\vecbound{\textbf{v}}=0$, we obtain a description of an approximate element $\vecapp{\gamma(\vecapp{\textbf{v}})}$ and an error $\vecbound{\gamma(\vecapp{\textbf{v}})}$ due to the operations we apply.
    \item Assuming $\vecbound{\textbf{v}}=r^*$ (from Theorem~\ref{th:NK}), we control the element $\gamma(\textbf{v})$ described in $\ell^1_\nu$ by a finite element $\vecapp{\gamma(\vecapp{\textbf{v}})}$ and an error $\vecbound{\gamma(\textbf{v})}$ that is provided by $\gamma$.
\end{itemize}
All this will be useful to determine the bounds $Y$, $Z_1$ and $Z_2$ in the hypotheses~\eqref{boundall} of Theorem~\ref{th:NK}. We will give some examples of our results on entire functions, rational fractions, and compositions thereof.

\subsubsection{Division and product}
We introduce those two operations in our Banach algebra $\ell^1_\nu$. We specify the bounds in each case and how we can control the errors of the result of the operation.
 
\begin{lem}  \label{lem:approxinv}
Let $\textbf{z} \in \ell^1_\nu$ and $\textbf{a} \in \ell^1_\nu$ such that $\normenu{\textbf{a}*\textbf{z}-\textbf{1}} < 1$. Then, $\textbf{z}$ is invertible and we have
\begin{align*}
    \textbf{z}^{-1} &=  \textbf{a}*\sum_{j=0}^{+\infty} (-1)^j(\textbf{a}*\textbf{z} - \textbf{1})^{*j}, \\
    \normenu{\textbf{z}^{-1}-\textbf{a}} &\leq \normenu{\textbf{a}} \dfrac{\normenu{\textbf{a}*\textbf{z}-\textbf{1}}}{1-\normenu{\textbf{a}*\textbf{z}-\textbf{1}}}.
\end{align*}
\end{lem}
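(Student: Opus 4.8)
The plan is to use the standard Neumann series trick, exploiting the Banach algebra structure of $\ell^1_\nu$. Set $\textbf{b} = \textbf{a}*\textbf{z} - \textbf{1}$, and observe that by hypothesis $\normenu{\textbf{b}} < 1$. Since $\ell^1_\nu$ is a unital commutative Banach algebra and $\normenu{\cdot}$ is submultiplicative with $\normenu{\textbf{1}} = 1$, the series $\sum_{j=0}^{+\infty} (-1)^j \textbf{b}^{*j}$ converges absolutely in $\ell^1_\nu$: indeed $\normenu{(-1)^j\textbf{b}^{*j}} \leq \normenu{\textbf{b}}^j$, which is a convergent geometric series. Call its sum $\textbf{w}$. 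The first step is then to check that $\textbf{w}$ is the inverse of $\textbf{1} + \textbf{b} = \textbf{a}*\textbf{z}$, by the usual telescoping computation $(\textbf{1}+\textbf{b})*\textbf{w} = \lim_{M\to\infty}\sum_{j=0}^{M}(-1)^j\textbf{b}^{*j} + \sum_{j=0}^{M}(-1)^j\textbf{b}^{*(j+1)} = \lim_{M\to\infty}\left(\textbf{1} + (-1)^M\textbf{b}^{*(M+1)}\right) = \textbf{1}$, the last term vanishing since $\normenu{\textbf{b}^{*(M+1)}} \leq \normenu{\textbf{b}}^{M+1}\to 0$; commutativity gives the identity on the other side as well.

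Next I would deduce invertibility of $\textbf{z}$ itself. From $(\textbf{a}*\textbf{z})*\textbf{w} = \textbf{1}$ and commutativity we get $\textbf{z}*(\textbf{a}*\textbf{w}) = \textbf{1}$, so $\textbf{z}$ has a right inverse $\textbf{a}*\textbf{w}$; by commutativity this is a two-sided inverse, hence $\textbf{z}^{-1} = \textbf{a}*\textbf{w} = \textbf{a}*\sum_{j=0}^{+\infty}(-1)^j(\textbf{a}*\textbf{z}-\textbf{1})^{*j}$, which is the claimed formula. (One should note $\textbf{a} \neq \textbf{0}$ automatically, since otherwise $\textbf{a}*\textbf{z} - \textbf{1} = -\textbf{1}$ has norm $1$, contradicting the hypothesis.)

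For the error bound, I would write $\textbf{z}^{-1} - \textbf{a} = \textbf{a}*\textbf{w} - \textbf{a} = \textbf{a}*(\textbf{w} - \textbf{1}) = \textbf{a}*\sum_{j=1}^{+\infty}(-1)^j\textbf{b}^{*j}$. Applying the algebra norm inequality twice:
\begin{align*}
    \normenu{\textbf{z}^{-1} - \textbf{a}} \leq \normenu{\textbf{a}}\sum_{j=1}^{+\infty}\normenu{\textbf{b}}^j = \normenu{\textbf{a}}\,\frac{\normenu{\textbf{b}}}{1 - \normenu{\textbf{b}}} = \normenu{\textbf{a}}\,\frac{\normenu{\textbf{a}*\textbf{z}-\textbf{1}}}{1 - \normenu{\textbf{a}*\textbf{z}-\textbf{1}}},
\end{align*}
which is exactly the stated estimate. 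There is no real obstacle here; this is a textbook Neumann series argument, and the only mild point of care is making sure the telescoping limit is justified by the norm decay of $\textbf{b}^{*(M+1)}$ and that commutativity is invoked to pass from one-sided to two-sided inverses. The lemma is a preparatory tool, so I would keep the write-up short.
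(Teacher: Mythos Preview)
Your proposal is correct and follows exactly the approach indicated in the paper, which simply invokes the Neumann series criterion for invertibility and the triangle inequality for the error bound. Your write-up is more detailed than the paper's one-line proof, but the underlying argument is identical.
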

\begin{proof} The existence of the inverse is guaranteed by the criterion of the Neumann series. Writting down the difference and applying the triangle inequality gives the result above.
\end{proof}

\begin{rem}
    In practice, $\textbf{a}$ should of course be taken as a numerically computed approximation of $\textbf{z}^{-1}$ and will therefore be finite. If $\textbf{z}$ is itself finite, then $\normenu{\textbf{a}*\textbf{z}-\textbf{1}}$ amounts to a finite computation and can therefore be evaluated exactly (up to rounding errors, which are taken care of using interval arithmetic~\cite{Moo79,Tuc11}).
    If not, as long as we have a decomposition $\textbf{z} = \vecapp{\textbf{z}} + \vecdiff{\textbf{z}}$ with a computable error bound $\vecbound{\textbf{z}}$ for $\vecdiff{\textbf{z}}$, we can use that
    \begin{align*}
        \normenu{\textbf{a}*\textbf{z}-\textbf{1}} \leq \normenu{\textbf{a}*\vecapp{\textbf{z}} - \textbf{1}} + \normenu{\textbf{a}} \vecbound{\textbf{z}},
    \end{align*}
    the right-hand side now being computable.
\end{rem}

\begin{cor}\label{cor:inv}
    Let us take any $\textbf{z}\in \ell^1_\nu$, let $N\in\mathbb{N}$ and $\vecapp{\textbf{z}}\in \ell^1_{\nu,N}$ an approximation of $\textbf{z}$ with error bound $\vecbound{\textbf{z}}\geq 0$.
    Let $\textbf{a} \in \ell^1_{\nu,N}$ such that $\normenu{\textbf{a}*\vecapp{\textbf{z}} - \textbf{1}} + \normenu{\textbf{a}} \vecbound{\textbf{z}} < 1$.

    Then, $\textbf{z}$ is invertible in $\ell^1_\nu$ and we can take 
    \begin{align*}
    \vecapp{\textbf{z}^{-1}} &= \textbf{a} \\
    \vecbound{\textbf{z}^{-1}} &= \normenu{\textbf{a}} \dfrac{\normenu{\textbf{a}*\vecapp{\textbf{z}} - \textbf{1}} + \normenu{\textbf{a}} \vecbound{\textbf{z}}}{1 - \normenu{\textbf{a}*\vecapp{\textbf{z}} - \textbf{1}} - \normenu{\textbf{a}} \vecbound{\textbf{z}}}.
    \end{align*}
\end{cor}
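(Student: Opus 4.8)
The plan is to derive Corollary~\ref{cor:inv} directly from Lemma~\ref{lem:approxinv} by bounding $\normenu{\textbf{a}*\textbf{z}-\textbf{1}}$ in terms of the computable quantities $\normenu{\textbf{a}*\vecapp{\textbf{z}}-\textbf{1}}$, $\normenu{\textbf{a}}$ and $\vecbound{\textbf{z}}$. First I would write $\textbf{z} = \vecapp{\textbf{z}} + \vecdiff{\textbf{z}}$, so that $\textbf{a}*\textbf{z}-\textbf{1} = (\textbf{a}*\vecapp{\textbf{z}}-\textbf{1}) + \textbf{a}*\vecdiff{\textbf{z}}$. Taking $\normenu{\cdot}$ and using the triangle inequality together with the algebra property $\normenu{\textbf{a}*\vecdiff{\textbf{z}}} \leq \normenu{\textbf{a}}\normenu{\vecdiff{\textbf{z}}} \leq \normenu{\textbf{a}}\vecbound{\textbf{z}}$ gives
\begin{align*}
    \normenu{\textbf{a}*\textbf{z}-\textbf{1}} \leq \normenu{\textbf{a}*\vecapp{\textbf{z}}-\textbf{1}} + \normenu{\textbf{a}}\vecbound{\textbf{z}} =: \delta.
\end{align*}
This is exactly the quantity assumed to be $<1$ in the hypothesis, so the smallness condition $\normenu{\textbf{a}*\textbf{z}-\textbf{1}}<1$ of Lemma~\ref{lem:approxinv} is satisfied and $\textbf{z}$ is invertible in $\ell^1_\nu$.

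Next I would invoke the error estimate of Lemma~\ref{lem:approxinv}, namely $\normenu{\textbf{z}^{-1}-\textbf{a}} \leq \normenu{\textbf{a}}\,\frac{\normenu{\textbf{a}*\textbf{z}-\textbf{1}}}{1-\normenu{\textbf{a}*\textbf{z}-\textbf{1}}}$, and observe that the map $t\mapsto \frac{t}{1-t}$ is increasing on $[0,1)$. Since $\normenu{\textbf{a}*\textbf{z}-\textbf{1}}\leq \delta < 1$, we may replace $\normenu{\textbf{a}*\textbf{z}-\textbf{1}}$ by $\delta$ in the bound, obtaining
\begin{align*}
    \normenu{\textbf{z}^{-1}-\textbf{a}} \leq \normenu{\textbf{a}}\,\frac{\delta}{1-\delta} = \normenu{\textbf{a}}\,\frac{\normenu{\textbf{a}*\vecapp{\textbf{z}}-\textbf{1}} + \normenu{\textbf{a}}\vecbound{\textbf{z}}}{1-\normenu{\textbf{a}*\vecapp{\textbf{z}}-\textbf{1}} - \normenu{\textbf{a}}\vecbound{\textbf{z}}},
\end{align*}
which is precisely the claimed value of $\vecbound{\textbf{z}^{-1}}$ with the approximation $\vecapp{\textbf{z}^{-1}}=\textbf{a}$. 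This shows $\normenu{\textbf{z}^{-1}-\vecapp{\textbf{z}^{-1}}} \leq \vecbound{\textbf{z}^{-1}}$, as required.

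There is no real obstacle here; the only mild subtlety is ensuring monotonicity of $t\mapsto t/(1-t)$ is invoked correctly so that the upper bound $\delta$ on $\normenu{\textbf{a}*\textbf{z}-\textbf{1}}$ indeed yields an upper bound on the error, and noting that the hypothesis $\delta<1$ is exactly what keeps the denominator positive and the Neumann series argument of Lemma~\ref{lem:approxinv} applicable. Everything on the right-hand side ($\normenu{\textbf{a}}$, $\normenu{\textbf{a}*\vecapp{\textbf{z}}-\textbf{1}}$, $\vecbound{\textbf{z}}$) is computable since $\textbf{a}$ and $\vecapp{\textbf{z}}$ are finite sequences, which is the point of the corollary for the subsequent computer-assisted estimates.
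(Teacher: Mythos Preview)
Your proof is correct and follows essentially the same approach as the paper: bound $\normenu{\textbf{a}*\textbf{z}-\textbf{1}}$ by $\normenu{\textbf{a}*\vecapp{\textbf{z}}-\textbf{1}} + \normenu{\textbf{a}}\vecbound{\textbf{z}}$ via the triangle inequality, then use the monotonicity of $t\mapsto t/(1-t)$ on $[0,1)$ to transfer this bound into the error estimate of Lemma~\ref{lem:approxinv}. The paper's proof is simply a terser version of what you wrote.
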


\begin{proof}
The function $x \mapsto \dfrac{x}{1-x}$ is increasing and we have $\normenu{\textbf{a}*\textbf{z} - \textbf{1}} \leq \normenu{\textbf{a}*\vecapp{\textbf{z}} - \textbf{1}} + \normenu{\textbf{a}}\vecbound{\textbf{z}}$. Hence, we have the expected result. 
\end{proof}

\begin{rem}
  Lemma~\ref{lem:approxinv} shows how to compute naturally in the Banach algebra. Corollary~\ref{cor:inv} shows how we get an explicit expression of an error bound depending on the known errors.  Even if the Corollary contains the Lemma, the two results express two ways of thinking.
\end{rem}

\begin{lem}\label{lem:prod}
    Let $\textbf{x}, \, \textbf{y} \, \in \ell^1_\nu$, $N\in\mathbb{N}$. Let $\vecapp{\textbf{x}}, \, \vecapp{\textbf{y}} \in \ell^1_{\nu,N}$ be approximations and $\vecbound{\textbf{x}}, \, \vecbound{\textbf{y}} \geq 0$ be error bounds of $\textbf{x}$ and $\textbf{y}$ respectively. We can take
    \begin{align*}
        \vecapp{\textbf{x}*\textbf{y}} &= \vecapp{\textbf{x}}*\vecapp{\textbf{y}}, \\
        \vecbound{\textbf{x}*\textbf{y}} &= \normenu{\vecapp{\textbf{x}}}\vecbound{\textbf{y}} + \normenu{\vecapp{\textbf{y}}}\vecbound{\textbf{x}} + \vecbound{\textbf{x}}\vecbound{\textbf{y}}.
    \end{align*}
\end{lem}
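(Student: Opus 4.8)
The plan is to write $\textbf{x} = \vecapp{\textbf{x}} + \vecdiff{\textbf{x}}$ and $\textbf{y} = \vecapp{\textbf{y}} + \vecdiff{\textbf{y}}$, where by assumption $\normenu{\vecdiff{\textbf{x}}} \leq \vecbound{\textbf{x}}$ and $\normenu{\vecdiff{\textbf{y}}} \leq \vecbound{\textbf{y}}$, and then expand the convolution product bilinearly:
\begin{align*}
    \textbf{x}*\textbf{y} = \vecapp{\textbf{x}}*\vecapp{\textbf{y}} + \vecapp{\textbf{x}}*\vecdiff{\textbf{y}} + \vecdiff{\textbf{x}}*\vecapp{\textbf{y}} + \vecdiff{\textbf{x}}*\vecdiff{\textbf{y}}.
\end{align*}
The first term $\vecapp{\textbf{x}}*\vecapp{\textbf{y}}$ is a finite computation (it lies in $\ell^1_{\nu,2N}$, by the truncation-level bound recalled in Section~\ref{sec:notations_approx}), so it is a legitimate choice for $\vecapp{\textbf{x}*\textbf{y}}$; the remaining three terms constitute the error $\vecdiff{\textbf{x}*\textbf{y}}$.

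Next I would bound that error. Since $\normenu{\cdot}$ is an algebra norm on $\ell^1_\nu$, applying the triangle inequality and submultiplicativity to the three error terms gives
\begin{align*}
    \normenu{\vecdiff{\textbf{x}*\textbf{y}}} \leq \normenu{\vecapp{\textbf{x}}}\normenu{\vecdiff{\textbf{y}}} + \normenu{\vecdiff{\textbf{x}}}\normenu{\vecapp{\textbf{y}}} + \normenu{\vecdiff{\textbf{x}}}\normenu{\vecdiff{\textbf{y}}} \leq \normenu{\vecapp{\textbf{x}}}\vecbound{\textbf{y}} + \normenu{\vecapp{\textbf{y}}}\vecbound{\textbf{x}} + \vecbound{\textbf{x}}\vecbound{\textbf{y}},
\end{align*}
which is exactly the claimed formula for $\vecbound{\textbf{x}*\textbf{y}}$. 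Note that each quantity on the right-hand side is computable: $\normenu{\vecapp{\textbf{x}}}$ and $\normenu{\vecapp{\textbf{y}}}$ are finite sums (evaluated rigorously with interval arithmetic), and $\vecbound{\textbf{x}}, \vecbound{\textbf{y}}$ are given computable numbers. One should also remark that $\textbf{x}*\textbf{y} \in \ell^1_\nu$ is automatically well defined because $\ell^1_\nu$ is a Banach algebra, so no a priori regularity assumption is needed.

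This proof is essentially a one-line computation, so there is no real obstacle; the only mild subtlety is cosmetic — making sure one records the right grouping of terms so that $\vecapp{\textbf{x}*\textbf{y}}$ is genuinely a finite object that the computer can store and reuse (possibly after re-truncating it to a lower level, as in Remark~\ref{rem:trunc}, absorbing the extra tail into the error bound). Beyond that, the statement follows directly from bilinearity of $*$ and the fact that $\normenu{\cdot}$ is an algebraic norm, both of which are recorded earlier in the section.
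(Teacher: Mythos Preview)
Your proof is correct and is exactly the argument the paper has in mind: the lemma is stated without proof and simply referred to as ``the straightforward statement above''. Your bilinear expansion together with the triangle inequality and the Banach-algebra submultiplicativity of $\normenu{\cdot}$ is precisely the intended one-line justification.
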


Combining Corollary~\ref{cor:inv} with the straightforward statement above, we have all the required ingredients to understand Banach algebra calculus with error handling.

\begin{thm}\label{th:quotient}
Let $\textbf{x}, \, \textbf{y} \, \in \ell^1_\nu$. Let $N\in\mathbb{N}$, $\vecapp{\textbf{x}}, \, \vecapp{\textbf{y}} \in \ell^1_{\nu,N}$ be approximations and $\vecbound{\textbf{x}}, \, \vecbound{\textbf{y}} \geq 0$ be error bounds of $\textbf{x}$ and $\textbf{y}$ respectively. Let $\textbf{a}\in\ell^1_{\nu,N}$ such that $\normenu{\textbf{a}*\vecapp{\textbf{y}} - \textbf{1}} + \normenu{\textbf{a}} \vecbound{\textbf{y}} < 1$. We can take
\begin{align*}
    \vecapp{\textbf{x}*\textbf{y}^{-1}} &= \vecapp{\textbf{x}}*\textbf{a}, \\
    \vecbound{\textbf{x}*\textbf{y}^{-1}} &= \normenu{\textbf{a}} \dfrac{\normenu{\vecapp{\textbf{x}}}\left(\normenu{\textbf{a}*\vecapp{\textbf{y}} - \textbf{1}} + \normenu{\textbf{a}} \vecbound{\textbf{y}}\right) +\vecbound{\textbf{x}}}{1-\normenu{\textbf{a}*\vecapp{\textbf{y}} - \textbf{1}} - \normenu{\textbf{a}} \vecbound{\textbf{y}}}.
\end{align*}

\end{thm}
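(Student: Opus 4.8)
The plan is to obtain Theorem~\ref{th:quotient} by simply chaining Corollary~\ref{cor:inv} (to invert $\textbf{y}$) with Lemma~\ref{lem:prod} (to multiply by $\textbf{x}$). First I would apply Corollary~\ref{cor:inv} to $\textbf{y}$, using the approximation $\vecapp{\textbf{y}}\in\ell^1_{\nu,N}$, the error bound $\vecbound{\textbf{y}}$, and the candidate approximate inverse $\textbf{a}\in\ell^1_{\nu,N}$. The hypothesis of the theorem, $\normenu{\textbf{a}*\vecapp{\textbf{y}}-\textbf{1}}+\normenu{\textbf{a}}\vecbound{\textbf{y}}<1$, is exactly the one required by Corollary~\ref{cor:inv}, so it applies and tells us that $\textbf{y}$ is invertible in $\ell^1_\nu$, that we may take $\vecapp{\textbf{y}^{-1}}=\textbf{a}$, and that a valid error bound is
\begin{equation*}
    \vecbound{\textbf{y}^{-1}}=\normenu{\textbf{a}}\,\frac{\delta}{1-\delta},\qquad \delta:=\normenu{\textbf{a}*\vecapp{\textbf{y}}-\textbf{1}}+\normenu{\textbf{a}}\vecbound{\textbf{y}}.
\end{equation*}

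Next I would apply Lemma~\ref{lem:prod} to the pair $\textbf{x},\textbf{y}^{-1}$, with approximations $\vecapp{\textbf{x}}$ and $\vecapp{\textbf{y}^{-1}}=\textbf{a}$, both of which lie in $\ell^1_{\nu,N}$ so that the lemma is directly applicable, and with error bounds $\vecbound{\textbf{x}}$ and $\vecbound{\textbf{y}^{-1}}$ as above. This yields $\vecapp{\textbf{x}*\textbf{y}^{-1}}=\vecapp{\textbf{x}}*\textbf{a}$ (an element of $\ell^1_{\nu,2N}$) and the error bound
\begin{equation*}
    \vecbound{\textbf{x}*\textbf{y}^{-1}}=\normenu{\vecapp{\textbf{x}}}\,\vecbound{\textbf{y}^{-1}}+\normenu{\textbf{a}}\,\vecbound{\textbf{x}}+\vecbound{\textbf{x}}\,\vecbound{\textbf{y}^{-1}}.
\end{equation*}

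Finally I would substitute $\vecbound{\textbf{y}^{-1}}=\normenu{\textbf{a}}\delta/(1-\delta)$ and simplify: factoring out $\normenu{\textbf{a}}$, the two terms carrying $\delta/(1-\delta)$ combine with $\vecbound{\textbf{x}}$ (rewritten as $\vecbound{\textbf{x}}(1-\delta)/(1-\delta)$) into the single fraction $(\normenu{\vecapp{\textbf{x}}}\delta+\vecbound{\textbf{x}})/(1-\delta)$, which is precisely the claimed expression for $\vecbound{\textbf{x}*\textbf{y}^{-1}}$. I do not expect any real obstacle here: the statement is essentially the composition of the two preceding results, and the only genuine content is the short algebraic bookkeeping in this last step, together with the observation that the truncation levels match so that Lemma~\ref{lem:prod} can be invoked.
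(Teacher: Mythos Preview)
Your proposal is correct and follows essentially the same approach as the paper: apply Corollary~\ref{cor:inv} to get $\vecapp{\textbf{y}^{-1}}=\textbf{a}$ and $\vecbound{\textbf{y}^{-1}}$, then apply Lemma~\ref{lem:prod} to $\textbf{x}$ and $\textbf{y}^{-1}$, and finally simplify. Your write-up is in fact slightly more detailed than the paper's, which just says ``We simplify and obtain the result'' at the last step.
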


\begin{proof}
Apply Corollary~\ref{cor:inv} to $\textbf{z} = \textbf{y} ,\, \vecapp{(\vecapp{\textbf{y}}^{-1})} = \textbf{a} $. 
Then $\textbf{y}$ is invertible and belongs to $\ell^1_\nu$. We have
$\vecapp{(\textbf{y}^{-1})} = \textbf{a}$ and $\vecbound{\textbf{y}^{-1}}$ associated.

Then we choose, according to Lemma~\ref{lem:prod}
    \begin{align*}
        \vecapp{\textbf{x}*\textbf{y}^{-1}} &= \vecapp{\textbf{x}}*\textbf{a}, \\
        \vecbound{\textbf{x}*\textbf{y}^{-1}} &= \normenu{\vecapp{\textbf{x}}}\vecbound{\textbf{y}^{-1}} + \normenu{\textbf{a}}\vecbound{\textbf{x}} + \vecbound{\textbf{y}^{-1}}\vecbound{\textbf{x}}.
    \end{align*}
We simplify and obtain the result.
\end{proof}

\subsubsection{Analytic functions} \label{sec:analytic_functions}

In the previous subsection, we have seen how to deal with fractions in the Banach algebra $\ell^1_\nu$, with rigorous error control, provided we had a description of both the numerator and the denominator in terms of an approximation and an error bound. In this subsection, we discuss how such approximations and error bounds for the numerator and the denominator can be obtained, provided those are analytic functions.

Let $f$ be an analytic function.
We consider $\textbf{v}\in\ell^1_\nu$ and for $N\in\mathbb{N}$, $\vecapp{\textbf{v}}\in\ell^1_{\nu,N}$ an approximation of $\textbf{v}$ and $\vecbound{\textbf{v}}\geq 0$ an associated error.
We will first use the analytic description of $f$ to obtain an explicit expression of $\vecapp{f(\vecapp{\textbf{v}})}$ and $\vecbound{f(\vecapp{\textbf{v}})}$.
Secondly, we will use the analytic properties of $f$ to get an explicit bound $\vecbound{f(\textbf{v})}$ on the norm of $\vecdiff{f(\textbf{v})} = f(\textbf{v}) - \vecapp{f(\vecapp{\textbf{v}})}$.
This means that we choose, as an approximation of $f(\textbf{v})$, a nearby element $\vecapp{f(\vecapp{\textbf{v}})}$. It belongs to some $\ell^1_{\nu,N'}, \ N' \in \mathbb{N}$, with a computable description.

Since we work in a (commutative) Banach algebra, we can still make use of Taylor expansions (see e.g.~\cite[Chapter 12]{FuncAn}), and the Fourier transform carries all the calculations from $\mathbb{R}$ to $\ell^1_\nu$ in a straightforward way.

\begin{thm} \label{th:Taylor}
Let $K\in\mathbb{N}\backslash \lbrace 0 \rbrace$ and  $f$ a function of class $\mathcal{C}^K(\mathbb{R};\,\mathbb{R})$.
Then, the function defined by \begin{equation*}f : \left\lbrace\begin{array}{lll}
    \ell^1_\nu &\longrightarrow & \ell^1_\nu   \\
    \textbf{v} &\longmapsto & \mathcal{F}(f\circ\mathcal{F}^{-1}(\textbf{v})) := f(\textbf{v})
\end{array} \right. \text{is $K$-differentiable.}
\end{equation*} 

We have the following Taylor expansion. Let $\textbf{v}_0\in\ell^1_\nu$,
\begin{equation*}
    \forall \textbf{v}\in\ell^1_\nu, \, \normenu{f(\textbf{v}) - \sum_{k=0}^{K-1} \dfrac{1}{k!}f^{(k)}(\textbf{v}_0)*(\textbf{v}-\textbf{v}_0)^{* k}} \leq \dfrac{\normenu{\textbf{v}-\textbf{v}_0}^K}{K!} \underset{\textbf{z}\in[\textbf{v}_0,\textbf{v}]}{\sup} \normenu{f^{(K)}(\textbf{z})}.
\end{equation*}
\end{thm}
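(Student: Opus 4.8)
The idea is to lift the classical Taylor expansion with Lagrange-type remainder from $\mathbb{R}$ to the Banach algebra $\ell^1_\nu$ via the Fourier isomorphism $\mathcal{F}$. First I would make precise the meaning of the lifted map: for $\textbf{v}\in\ell^1_\nu$ with $\nu>1$, the function $v=\mathcal{F}^{-1}(\textbf{v})$ is real-analytic (by the correspondence between $\ell^1_\nu$ and analytic functions recalled after the definition of $\ell^1_\nu$), hence takes values in a compact interval, so $f\circ v$ is of class $\mathcal{C}^K$ and its Fourier coefficients again lie in $\ell^1_\nu$; this gives a well-defined map $f:\ell^1_\nu\to\ell^1_\nu$. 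The differentiability claim then reduces to checking that $\textbf{v}\mapsto f(\textbf{v})$ is Fréchet-differentiable with $Df(\textbf{v})\textbf{h} = f'(\textbf{v})*\textbf{h}$, and more generally that the $k$-th derivative is the symmetric multilinear map $(\textbf{h}_1,\dots,\textbf{h}_k)\mapsto f^{(k)}(\textbf{v})*\textbf{h}_1*\cdots*\textbf{h}_k$. Since $*$ is continuous and $\ell^1_\nu$ is a Banach algebra, this is an exercise in transferring the pointwise chain rule: $\mathcal{F}$ and $\mathcal{F}^{-1}$ are isometric (or at least bounded) algebra morphisms in the appropriate sense, and multiplication of analytic functions corresponds to convolution of coefficient sequences.

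\textbf{Key steps.} (i) Establish the algebra/analyticity framework above, so that $f(\textbf{v})$, $f^{(k)}(\textbf{v})$ all make sense in $\ell^1_\nu$ and depend continuously (indeed $\mathcal{C}^K$-ly) on $\textbf{v}$. (ii) Fix $\textbf{v}_0,\textbf{v}\in\ell^1_\nu$ and consider the scalar-parametrized path $\varphi(t) = f\big(\textbf{v}_0 + t(\textbf{v}-\textbf{v}_0)\big)\in\ell^1_\nu$ for $t\in[0,1]$; by the chain rule from step (i), $\varphi$ is $\mathcal{C}^K$ with $\varphi^{(k)}(t) = f^{(k)}\big(\textbf{v}_0+t(\textbf{v}-\textbf{v}_0)\big)*(\textbf{v}-\textbf{v}_0)^{*k}$. (iii) Apply the Taylor formula with integral remainder to the Banach-space-valued function $\varphi$ on $[0,1]$:
\begin{equation*}
\varphi(1) = \sum_{k=0}^{K-1}\frac{1}{k!}\varphi^{(k)}(0) + \frac{1}{(K-1)!}\int_0^1 (1-t)^{K-1}\varphi^{(K)}(t)\,dt,
\end{equation*}
which is valid for maps into any Banach space. (iv) Recognize $\varphi(1)=f(\textbf{v})$, $\varphi^{(k)}(0) = f^{(k)}(\textbf{v}_0)*(\textbf{v}-\textbf{v}_0)^{*k}$, so the partial sum matches the asserted polynomial. (v) Bound the remainder: using $\normenu{\textbf{a}*\textbf{b}}\leq\normenu{\textbf{a}}\normenu{\textbf{b}}$ repeatedly,
\begin{equation*}
\normenu{\varphi^{(K)}(t)} \leq \normenu{f^{(K)}\big(\textbf{v}_0+t(\textbf{v}-\textbf{v}_0)\big)}\,\normenu{\textbf{v}-\textbf{v}_0}^K \leq \normenu{\textbf{v}-\textbf{v}_0}^K \sup_{\textbf{z}\in[\textbf{v}_0,\textbf{v}]}\normenu{f^{(K)}(\textbf{z})},
\end{equation*}
and since $\frac{1}{(K-1)!}\int_0^1(1-t)^{K-1}\,dt = \frac{1}{K!}$, pulling the norm inside the integral yields exactly the claimed estimate with constant $\frac{\normenu{\textbf{v}-\textbf{v}_0}^K}{K!}$.

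\textbf{Main obstacle.} The only genuinely delicate point is step (i): rigorously justifying that the composition operator $\textbf{v}\mapsto\mathcal{F}(f\circ\mathcal{F}^{-1}(\textbf{v}))$ maps $\ell^1_\nu$ into itself and is $K$ times Fréchet-differentiable with the convolution formula for its derivatives. One must be careful that $f\in\mathcal{C}^K(\mathbb{R};\mathbb{R})$ only — not analytic — so the output $f(\textbf{v})$ need not lie in $\ell^1_{\nu'}$ for $\nu'>\nu$, but it does lie in $\ell^1_\nu$ because $v=\mathcal{F}^{-1}(\textbf{v})$ ranges over a compact set and $\mathcal{C}^K$ functions of bounded-variation-type compositions preserve absolute summability with the weight; more carefully, one invokes that $\ell^1_\nu$ is a Banach algebra together with a local argument (e.g. writing $f$ near the range of $v$ via its own Taylor polynomial plus a $\mathcal{C}^K$ remainder, or citing the standard superposition/Nemytskii results for analytic sequence algebras as in~\cite{FuncAn}). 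Once the differentiability of the composition operator with $Df^{(k)}(\textbf{v}) = f^{(k)}(\textbf{v})*(\cdot)$ is in hand, everything else is the standard Banach-valued Taylor theorem and the submultiplicativity of $\normenu{\cdot}$, which are routine. I would therefore spend most of the write-up on step (i), perhaps proving the $k=1$ case in detail (difference quotient $\frac{f(\textbf{v}+\textbf{h})-f(\textbf{v})}{\,}$ analyzed via the scalar mean value theorem applied coefficientwise after transporting through $\mathcal{F}^{-1}$) and then indicating the induction on $k$.
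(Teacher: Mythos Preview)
The paper does not actually prove this theorem. It is stated immediately after the sentence ``Since we work in a (commutative) Banach algebra, we can still make use of Taylor expansions (see e.g.~[Chapter 12]{FuncAn}), and the Fourier transform carries all the calculations from $\mathbb{R}$ to $\ell^1_\nu$ in a straightforward way,'' and no further argument is given. So your proposal already goes well beyond what the paper offers, and the approach you sketch---reduce to a one-real-parameter path $\varphi(t)=f(\textbf{v}_0+t(\textbf{v}-\textbf{v}_0))$, apply the Banach-valued Taylor formula with integral remainder, and use submultiplicativity of $\normenu{\cdot}$ to bound the remainder---is exactly the standard way to obtain such an estimate in a commutative Banach algebra, and is presumably what the cited reference contains.

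One caveat worth flagging, which you yourself half-identify as the ``main obstacle'': the well-definedness claim in step~(i) is genuinely problematic for $f$ that is merely $\mathcal{C}^K$ and $\nu>1$. If $\textbf{v}\in\ell^1_\nu$ with $\nu>1$, then $v=\mathcal{F}^{-1}(\textbf{v})$ extends analytically to a strip, but $f\circ v$ need not be analytic when $f$ is only $\mathcal{C}^K$; its Fourier coefficients then need not decay exponentially, so $f(\textbf{v})$ need not lie in $\ell^1_\nu$. Your heuristic (``$\mathcal{C}^K$ functions of bounded-variation-type compositions preserve absolute summability with the weight'') does not survive scrutiny in this generality. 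The paper is aware of this looseness---see the remark just before Section~3.2 stating that well-definedness of $\gamma(\textbf{v})$ is not assumed a priori but obtained a posteriori---and in practice only invokes the theorem for analytic $f$ (polynomials, exponentials, rational functions), where the issue evaporates. So your steps~(ii)--(v) are fine and match the intended argument, but if you want a fully rigorous write-up you should either restrict the hypothesis on $f$ to analyticity on a suitable domain (as the applications require anyway) or restrict to $\nu=1$, rather than try to justify step~(i) in the stated generality.
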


We can apply this result to any entire function and go even further.

\begin{prop} \label{prop:entire_func}
    Let $f$ be an entire function, written $f(x) = \displaystyle \sum_{k=0}^{+\infty} a_k x^k $. Moreover, we denote $|f'|$ the entire function defined as $|f'|(x) = \displaystyle \sum_{k=0}^{+\infty} k|a_k| x^{k-1}$. 
    Let $\textbf{v}\in\ell^1_\nu$ and $N\in\mathbb{N}$. Let $\vecapp{\textbf{v}}\in\ell^1_{\nu,N}$ an approximation and $\vecbound{\textbf{v}}\geq 0$ an error bound.
    Taking
    \begin{align*}
        \vecapp{f(\vecapp{\textbf{v}})} &= \displaystyle \sum_{k=0}^{K-1} a_k \vecapp{\textbf{v}}^{*k}, \\
    \intertext{we get the following error bound}
        \vecbound{f(\vecapp{\textbf{v}})} &= \dfrac{\normenu{\vecapp{\textbf{v}}}^K}{K!} \underset{\textbf{z}\in[\textbf{0},\vecapp{\textbf{v}}]}{\sup} \normenu{f^{(K)}(\textbf{z})}.
    \end{align*}
Furthermore, we can take
    \begin{align*}
        \vecapp{f(\textbf{v})} &= \vecapp{f(\vecapp{\textbf{v}})} \\
         \vecbound{f(\textbf{v})} &= \vecbound{f(\vecapp{\textbf{v}})} +  |f'|(\normenu{\vecapp{\textbf{v}}}+\vecbound{\textbf{v}}) \vecbound{\textbf{v}} .
    \end{align*}
\end{prop}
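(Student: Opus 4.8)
The plan is to prove the two parts of Proposition~\ref{prop:entire_func} in sequence, first the bound on $\vecbound{f(\vecapp{\textbf{v}})}$ (the error due to truncating the Taylor series of $f$ at order $K$, evaluated at the \emph{exact} approximation $\vecapp{\textbf{v}}$), and then the bound on $\vecbound{f(\textbf{v})}$ (which additionally accounts for replacing $\textbf{v}$ by $\vecapp{\textbf{v}}$).

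For the first part, I would apply Theorem~\ref{th:Taylor} with $\textbf{v}_0 = \textbf{0}$ and with the element $\vecapp{\textbf{v}}$ playing the role of $\textbf{v}$. Since $f$ is entire, it is in particular $\mathcal{C}^K(\mathbb{R};\mathbb{R})$ for every $K$, so Theorem~\ref{th:Taylor} applies, and $f^{(k)}(\textbf{0}) = a_k k!\,\textbf{1}^{*k}$, so $\frac{1}{k!}f^{(k)}(\textbf{0})*\vecapp{\textbf{v}}^{*k} = a_k \vecapp{\textbf{v}}^{*k}$ — hence $\sum_{k=0}^{K-1}\frac{1}{k!}f^{(k)}(\textbf{0})*\vecapp{\textbf{v}}^{*k} = \vecapp{f(\vecapp{\textbf{v}})}$ exactly as defined. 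The Taylor remainder bound of Theorem~\ref{th:Taylor} then reads $\normenu{f(\vecapp{\textbf{v}}) - \vecapp{f(\vecapp{\textbf{v}})}} \leq \frac{\normenu{\vecapp{\textbf{v}}}^K}{K!}\sup_{\textbf{z}\in[\textbf{0},\vecapp{\textbf{v}}]}\normenu{f^{(K)}(\textbf{z})}$, which is precisely $\vecbound{f(\vecapp{\textbf{v}})}$. This gives a valid error bound for $\vecdiff{f(\vecapp{\textbf{v}})} = f(\vecapp{\textbf{v}}) - \vecapp{f(\vecapp{\textbf{v}})}$.

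For the second part, the strategy is to write $f(\textbf{v}) - \vecapp{f(\vecapp{\textbf{v}})} = \left(f(\textbf{v}) - f(\vecapp{\textbf{v}})\right) + \left(f(\vecapp{\textbf{v}}) - \vecapp{f(\vecapp{\textbf{v}})}\right)$ and bound the two pieces by the triangle inequality. The second piece is controlled by $\vecbound{f(\vecapp{\textbf{v}})}$ from the first part. For the first piece, $f(\textbf{v}) - f(\vecapp{\textbf{v}})$, I would use a first-order Taylor expansion (Theorem~\ref{th:Taylor} with $K=1$) around $\textbf{v}_0 = \vecapp{\textbf{v}}$, giving $\normenu{f(\textbf{v}) - f(\vecapp{\textbf{v}})} \leq \normenu{\textbf{v}-\vecapp{\textbf{v}}}\sup_{\textbf{z}\in[\vecapp{\textbf{v}},\textbf{v}]}\normenu{f'(\textbf{z})} \leq \vecbound{\textbf{v}}\sup_{\textbf{z}\in[\vecapp{\textbf{v}},\textbf{v}]}\normenu{f'(\textbf{z})}$. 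It remains to bound $\normenu{f'(\textbf{z})}$ uniformly over the segment $[\vecapp{\textbf{v}},\textbf{v}]$: writing $f'(x) = \sum_{k\geq 1} k a_k x^{k-1}$ and using that $\normenu{\cdot}$ is an algebra norm, $\normenu{f'(\textbf{z})} \leq \sum_{k\geq 1} k|a_k|\normenu{\textbf{z}}^{k-1} = |f'|(\normenu{\textbf{z}})$, and since every $\textbf{z}$ on the segment satisfies $\normenu{\textbf{z}} \leq \max(\normenu{\vecapp{\textbf{v}}}, \normenu{\textbf{v}}) \leq \normenu{\vecapp{\textbf{v}}} + \vecbound{\textbf{v}}$ together with the monotonicity of $|f'|$ (nonnegative coefficients), we get $\normenu{f'(\textbf{z})} \leq |f'|(\normenu{\vecapp{\textbf{v}}}+\vecbound{\textbf{v}})$. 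Combining, $\normenu{f(\textbf{v}) - \vecapp{f(\vecapp{\textbf{v}})}} \leq \vecbound{f(\vecapp{\textbf{v}})} + |f'|(\normenu{\vecapp{\textbf{v}}}+\vecbound{\textbf{v}})\vecbound{\textbf{v}}$, as claimed.

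The only real subtlety — and the step I would be most careful about — is the justification that $|f'|$ is itself an entire function and that the algebra-norm estimate $\normenu{\sum_k c_k \textbf{z}^{*k}} \leq \sum_k |c_k| \normenu{\textbf{z}}^k$ is legitimate, i.e.\ that the series converges absolutely in $\ell^1_\nu$; this follows because $f$ entire implies $\limsup |a_k|^{1/k} = 0$, so $|f'|$ has infinite radius of convergence and the majorizing scalar series converges for every value of $\normenu{\textbf{z}}$. One should also note that the segment $[\vecapp{\textbf{v}},\textbf{v}]$ is a valid domain over which to take the supremum (it is a bounded subset of $\ell^1_\nu$), and that the bound does not require $\textbf{v}$ to be known explicitly — only $\normenu{\textbf{v}-\vecapp{\textbf{v}}} \leq \vecbound{\textbf{v}}$ is used — which is exactly what makes the result applicable in the Newton--Kantorovich context where $\vecbound{\textbf{v}} = r^*$.
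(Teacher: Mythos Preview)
Your proof is correct and follows essentially the same approach as the paper: apply Theorem~\ref{th:Taylor} at $\textbf{v}_0=\textbf{0}$ to get $\vecbound{f(\vecapp{\textbf{v}})}$, then split $f(\textbf{v})-\vecapp{f(\vecapp{\textbf{v}})}$ into $\big(f(\textbf{v})-f(\vecapp{\textbf{v}})\big)+\vecdiff{f(\vecapp{\textbf{v}})}$ and bound the first piece via Theorem~\ref{th:Taylor} with $K=1$ together with the algebra-norm estimate $\normenu{f'(\textbf{z})}\leq |f'|(\normenu{\textbf{z}})$. The only cosmetic difference is that the paper enlarges the segment $[\vecapp{\textbf{v}},\textbf{v}]$ to the ball $\mathcal{B}_\nu(\vecapp{\textbf{v}},\vecbound{\textbf{v}})$ before bounding $\normenu{\textbf{z}}$, whereas you bound directly on the segment; both lead to the same final estimate.
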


\begin{proof}
    We just write $f(\textbf{v}) = \vecapp{f(\vecapp{\textbf{v}})} + \vecdiff{f(\vecapp{\textbf{v}})} + f(\textbf{v}) - f(\vecapp{\textbf{v}})$ and apply the triangular inequality. We have $\normenu{\vecdiff{f(\vecapp{\textbf{v}})}}\leq \vecbound{f(\vecapp{\textbf{v}})}$ thanks to Theorem~\ref{th:Taylor}.
    The last thing to bound is $\normenu{f(\textbf{v}) - f(\vecapp{\textbf{v}})}$. To do this we apply Theorem~\ref{th:Taylor} with $K=1$, $\textbf{v}_0=\vecapp{\textbf{v}}$. We have
    \begin{align*}
        \normenu{f(\textbf{v}) - f(\vecapp{\textbf{v}})} &\leq \sup_{\textbf{z}\in [\textbf{v},\vecapp{\textbf{v}}]} \normenu{f'(\textbf{z})} \normenu{\textbf{v}-\vecapp{\textbf{v}}} \\
        &\leq \sup_{\textbf{z}\in \mathcal{B}_\nu(\vecapp{\textbf{v}},\vecbound{\textbf{v}})} \normenu{f'(\textbf{z})} \vecbound{\textbf{v}} \\
        &\leq \sup_{\textbf{z}\in \mathcal{B}_\nu(\vecapp{\textbf{v}},\vecbound{\textbf{v}})} \normenu{\sum_{k=0}^{+\infty} k\,a_k \textbf{z}^{*k-1}} \vecbound{\textbf{v}} \\
        &\leq \sup_{\textbf{z}\in \mathcal{B}_\nu(\vecapp{\textbf{v}},\vecbound{\textbf{v}})} \sum_{k=0}^{+\infty} k\,|a_k| \normenu{\textbf{z}}^{k-1} \vecbound{\textbf{v}},
\end{align*}
but $\normenu{\textbf{z}}\leq \normenu{\vecapp{\textbf{v}}}+\vecbound{\textbf{v}}$ for any $\textbf{z}\in \mathcal{B}_\nu(\vecapp{\textbf{v}},\vecbound{\textbf{v}})$ then,
\begin{align*}
        \normenu{f(\textbf{v}) - f(\vecapp{\textbf{v}})} &\leq \sum_{k=0}^{+\infty} k\,|a_k| (\normenu{\vecapp{\textbf{v}}}+\vecbound{\textbf{v}})^{k-1} \vecbound{\textbf{v}} \\
        &\leq |f'|(\normenu{\vecapp{\textbf{v}}}+\vecbound{\textbf{v}})\vecbound{\textbf{v}}.
    \end{align*}
Hence, the result holds.
\end{proof}

\begin{rem}
    In practice, we need to be able to evaluate $\vecbound{f(\vecapp{\textbf{v}})}$ and $\vecbound{f(\textbf{v})}$ (or at least to get computable upper-bounds for them). This must be done on a case by case basis, depending on the function $f$ and the available information we have on its derivatives (or on the asymptotic behavior of the coefficients $a_k$), and might prove difficult for some analytic functions. However, among the functions $\gamma$ that are usually considered for~\eqref{eq:syst_para}, the typical entire nonlinearities are based on the exponential function, for which these computations can be carried out. See Section~\ref{sec:examples} for some explicit examples.
\end{rem}

\begin{rem} 
\label{rem:entire_analytic}
We stated Proposition~\ref{prop:entire_func} for entire functions for the sake of convenience, but the result generalizes in a straightforward way to functions $f$ which are merely analytic, provided the disk of center $0$ and radius $\normenu{\vecapp{\textbf{v}}}+\vecbound{\textbf{v}}$ is contained in the domain of analyticity of $f$. A slightly more natural and less restrictive assumption would be to require that the range of $\vecapp{\textbf{v}}$ is contained in the domain of analyticity of $f$, and at a distance at least $\vecbound{\textbf{v}}$ of its boundary. This goes beyond the scope of this work, but will be the subject or further studies.
\end{rem}

\begin{rem} \label{rem:pol}
    When $f$ is a polynomial $P = \displaystyle \sum_{k=0}^{\deg P}a_k X^k$ we have the following result as a special case of Proposition~\ref{prop:entire_func}:
    \begin{align*}
        \vecapp{P(\vecapp{\textbf{v}})} &= \sum_{k=0}^{\deg P}a_k \vecapp{\textbf{v}}^{*k}, \\
        \vecbound{P(\vecapp{\textbf{v}})} &= 0, \\
        \vecapp{P(\textbf{v})} &= \vecapp{P(\vecapp{\textbf{v}})}, \\
        \vecbound{P(\textbf{v})} &= |P'|(\normenu{\vecapp{\textbf{v}}}+\vecbound{\textbf{v}}) \vecbound{\textbf{v}}.
    \end{align*}
\end{rem}

\subsubsection{Examples}
\label{sec:examples}

In this whole section, we again take $\textbf{v}\in\ell^1_\nu$ , $N\in\mathbb{N}$, $\vecapp{\textbf{v}}\in\ell^1_{\nu,N}$ an approximation and $\vecbound{\textbf{v}}\geq 0$ an error bound.
We will show the choices we made to get a description in $\ell^1_\nu$ of the functions $\gamma$ involved in Theorem~\ref{th:single_sol}, Theorem~\ref{th:WX} and Theorem~\ref{th:bif_diag}.
\paragraph{\underline{The exponential}:}
Let us consider $f = \e{\alpha\,\cdot}$, for some $\alpha\in\R$. We can apply Proposition~\ref{prop:entire_func} and use the fact that $\underset{\textbf{z}\in[\textbf{0},\vecapp{\textbf{v}}]}{\sup} \normenu{f^{(K)}(\textbf{z})} \leq |\alpha|^{K} \e{|\alpha|\, \normenu{\vecapp{\textbf{v}}}}$ to get
\begin{align*}
    \vecapp{f(\vecapp{\textbf{v}})} &= \displaystyle \sum_{k=0}^{K-1} \dfrac{\alpha^k}{k!} \vecapp{\textbf{v}}^{*k}, \\
    \vecbound{f(\vecapp{\textbf{v}})} &= \dfrac{|\alpha|^K\normenu{\vecapp{\textbf{v}}}^K}{K!} \e{|\alpha|\, \normenu{\vecapp{\textbf{v}}}}, 
\end{align*}
and
\begin{align*}
    \vecapp{f(\textbf{v})} &= \vecapp{f(\vecapp{\textbf{v}})}, \\ 
    \vecbound{f(\textbf{v})} &= \vecbound{f(\vecapp{\textbf{v}})} + |\alpha|\e{|\alpha|(\normenu{\vecapp{\textbf{v}}}+\vecbound{\textbf{v}})}\vecbound{\textbf{v}},
\end{align*}
where the error bounds are now computable. 
\begin{rem}
\label{rem:exp}
    Using some specific properties of the exponential function, we could also get a sharper error bound for $\normenu{f(\textbf{v})-\vecapp{f(\textbf{v})}}$. Indeed, when in the proof of Proposition~\ref{prop:entire_func} we need to estimate $\displaystyle\sup_{\textbf{z}\in \mathcal{B}_\nu(\vecapp{\textbf{v}},\vecbound{\textbf{v}})} \normenu{f'(\textbf{z})}$, we do have
    \begin{align*}
        \sup_{\textbf{v}\in\mathcal{B}_{\nu}(\vecapp{\textbf{v}},\vecbound{\textbf{v}})} \normenu{f'(\textbf{v})} \leq \left(\normenu{\vecapp{f'(\vecapp{\textbf{v}})}} + \vecbound{f'(\vecapp{\textbf{v}})}\right) e^{|\alpha|\vecbound{\textbf{v}}},
    \end{align*}
    which could be significantly smaller than $|\alpha|\e{|\alpha|(\normenu{\vecapp{\textbf{v}}}+\vecbound{\textbf{v}})}\vecbound{\textbf{v}}$.
\end{rem}

\paragraph{\underline{A fraction involving an exponential}:}
In the literature \cite{WX21}, the function $\gamma(v) = \dfrac{1}{1 + \e{9(v - 1)}}$ is considered in system~\eqref{eq:syst_para}. So let us manage it with our new tools. We first have to consider $g = 1 + \e{9( \ \cdot \, - 1)}$, which is not too far from the first example. We can choose,
\begin{align*} 
    \vecapp{g(\vecapp{\textbf{v}})} &= \textbf{1} + \sum_{k=0}^{K-1} \dfrac{9^k}{k!} (\vecapp{\textbf{v}}-\textbf{1})^{k}, \\ 
    \vecbound{g(\vecapp{\textbf{v}})} &= \dfrac{9^K\normenu{\vecapp{\textbf{v}}-\textbf{1}}^K}{K!} \e{9\normenu{\vecapp{\textbf{v}}-\textbf{1}}},
\end{align*}
and
\begin{align*}
    \vecapp{g(\textbf{v})} &= \vecapp{g(\vecapp{\textbf{v}})}, \\
    \vecbound{g(\textbf{v})} &= \vecbound{g(\vecapp{\textbf{v}})} + 9\e{9(\normenu{\vecapp{\textbf{v}}-\textbf{1}}+\vecbound{\textbf{v}})}\vecbound{\textbf{v}}.
\end{align*}
We point out that Remark~\ref{rem:exp} also applies here to get a better bound $\vecbound{g(\textbf{v})}$.

We can then apply Corollary~\ref{cor:inv}, with $\textbf{a}\in \ell^1_{\nu,N}$ such that $\normenu{\textbf{a}*\vecapp{g(\vecapp{\textbf{v}})} - \textbf{1}} + \normenu{\textbf{a}}\vecbound{g(\vecapp{\textbf{v}})}< 1$, both to $g(\vecapp{\textbf{v}})$ and to $g(\textbf{v})$, which yields,
\begin{align*}
    \vecapp{g(\vecapp{\textbf{v}})^{-1}} &= \textbf{a}, \\
    \vecbound{g(\vecapp{\textbf{v}})^{-1}} &= \normenu{\textbf{a}} \dfrac{\normenu{\textbf{a}*\vecapp{{g(\vecapp{\textbf{v}})}} - \textbf{1}} + \normenu{\textbf{a}} \vecbound{g(\vecapp{\textbf{v}})}}{1 - \normenu{\textbf{a}*\vecapp{g(\vecapp{\textbf{v}})} - \textbf{1}} - \normenu{\textbf{a}} \vecbound{g(\vecapp{\textbf{v}})}},
\end{align*}
and
\begin{align*}
    \vecapp{g(\textbf{v})^{-1}} &= \vecapp{g(\vecapp{\textbf{v}})^{-1}}, \\
    \vecbound{g(\textbf{v})^{-1}} &= \normenu{\textbf{a}}\dfrac{ \normenu{\textbf{a}*\vecapp{g(\vecapp{\textbf{v}})} - \textbf{1}} + \normenu{\textbf{a}}(\vecbound{g(\vecapp{\textbf{v}})} + 9\e{9(\normenu{\vecapp{\textbf{v}}-\textbf{1}}+\vecbound{\textbf{v}})}\vecbound{\textbf{v}})}{1 - \normenu{\textbf{a}*\vecapp{g(\vecapp{\textbf{v}})} - \textbf{1}} - \normenu{\textbf{a}}(\vecbound{g(\vecapp{\textbf{v}})} + 9\e{9(\normenu{\vecapp{\textbf{v}}-\textbf{1}}+\vecbound{\textbf{v}})}\vecbound{\textbf{v}})}.
\end{align*}

\paragraph{\underline{A rational fraction}:}
For a rational fraction $\gamma=\dfrac{P}{Q}$, we apply Remark~\ref{rem:pol} and Theorem~\ref{th:quotient}, with $\textbf{a}\in\ell^1_{\nu,N}$ such that $\normenu{\textbf{a}*Q(\vecapp{\textbf{v}}) - \textbf{1}} + \normenu{\textbf{a}}|Q'|(\normenu{\vecapp{\textbf{v}}}+\vecbound{\textbf{v}})\vecbound{\textbf{v}}<1$, which yields
\begin{align*}
        \vecapp{\gamma(\vecapp{\textbf{v}})} &= P(\vecapp{\textbf{v}})*\textbf{a}. \\
        \vecbound{\gamma(\vecapp{\textbf{v}})}
        &= \normenu{P(\vecapp{\textbf{v}})} \normenu{\textbf{a}}\dfrac{\normenu{\textbf{a}*Q(\vecapp{\textbf{v}})-\textbf{1}}}{1 - \normenu{\textbf{a}*Q(\vecapp{\textbf{v}})-\textbf{1}}},
\end{align*}
and
\begin{align*}
    \vecapp{\gamma(\textbf{v})} &= \vecapp{\gamma(\vecapp{\textbf{v}})}, \\
    \vecbound{\gamma(\textbf{v})} &= \normenu{\textbf{a}}\dfrac{\normenu{P(\vecapp{\textbf{v}})}\left(\normenu{\textbf{a}*Q(\vecapp{\textbf{v}})-\textbf{1}} + \normenu{\textbf{a}}|Q'|(\normenu{\vecapp{\textbf{v}}}+\vecbound{\textbf{v}})\vecbound{\textbf{v}}\right)+|P'|(\normenu{\vecapp{\textbf{v}}} +\vecbound{\textbf{v}})\vecbound{\textbf{v}}}{1 - \normenu{\textbf{a}*Q(\vecapp{\textbf{v}})-\textbf{1}} - \normenu{\textbf{a}}|Q'|(\normenu{\vecapp{\textbf{v}}}+\vecbound{\textbf{v}})\vecbound{\textbf{v}}}.
\end{align*}

\begin{rem}
    A rational fraction is a analytic function. Therefore, we might deal with it as discussed in Remark~\ref{rem:entire_analytic}. However, even in situations where this would prove feasible, we expect the approximation and estimates obtained via Theorem~\ref{th:quotient} to be both sharper and cheaper to compute.
\end{rem}

\section{Derivation of the bounds}\label{sec:bounds}

In this section, we first define the operator $A$ that we are going to use in Theorem~\ref{th:NK}, and then derive bounds $Y$, $Z_1$ and $Z_2$ satisfying assumptions \eqref{boundY}, \eqref{boundZ1}
and \eqref{boundZ2}. These estimates rely on the controlled approximations of $\gamma$ introduced in Section~\ref{sec:tool}. 
Let $\overline{\textbf{U}} = (\bar{\textbf{u}},\bar{\textbf{v}}) \in (\ell^1_{\nu,N})^2$, an approximate solution of \eqref{eq:Fourier2}. Thanks to Section~\ref{sec:tool} with $\vecapp{\textbf{v}}=\bar{\textbf{v}}$, we can calculate $\vecapp{\gamma(\bar{\textbf{v}})}$, $\vecbound{\gamma(\bar{\textbf{v}})}$, $\vecapp{\gamma'(\bar{\textbf{v}})}$, $\vecbound{\gamma'(\bar{\textbf{v}})}$,  $\vecapp{\gamma''(\bar{\textbf{v}})}$ and $\vecbound{\gamma''(\bar{\textbf{v}})}$, at least for the functions $\gamma$ used in Theorem~\ref{th:single_sol}, Theorem~\ref{th:WX} and Theorem~\ref{th:bif_diag}.

For any $\textbf{U} = (\textbf{u},\textbf{v})$ in $\ell^1_\nu$, we recall that
\begin{align*}
    F(\textbf{U}) &= \begin{pmatrix}\Delta(\gamma(\textbf{v})*\textbf{u})+\sigma\textbf{u}*(\textbf{1}-\textbf{u}) \\ d\Delta\textbf{v}+\textbf{u}-\textbf{v}\end{pmatrix}, \\
    \intertext{so that}
    DF(\textbf{U}) &= \begin{pmatrix} \Delta\gamma(\textbf{v})+\sigma(\textbf{1}-2\textbf{u}) & \Delta(\gamma'(\textbf{v})*\textbf{u}) \\ \textbf{1} & d\Delta-\textbf{1}\end{pmatrix}.
\end{align*}

For $A$, we consider a well chosen approximation of the inverse of $DF(\overline{\textbf{U}})$, combining a numerically approximated inverse of a truncation of $DF(\overline{\textbf{U}})$ and an infinite rest that correspond to the ``main'' part of the inverse. Following~\cite{B22}, we therefore take
\begin{equation*}
 A = \left( \begin{array}{c|c} 
\begin{array}{cc}
     \begin{array}{c|} A^{11}_{11} \\ \hline \end{array} &  \\
      & \mathrm{M}(\textbf{w}^{11})\Delta^{-1} 
\end{array} 
&
\begin{array}{cc}
     \begin{array}{c|} A^{12}_{11} \\ \hline \end{array}&  \\
      & \mathrm{M}(\textbf{w}^{12})\Delta^{-1}
\end{array} 
\\ \hline
\begin{array}{cc}
     \begin{array}{c|} A^{21}_{11} \\ \hline \end{array}&  \\
      & \mathrm{M}(\textbf{w}^{21})\Delta^{-1}  
\end{array}
&
\begin{array}{cc}
     \begin{array}{c|} A^{22}_{11} \\ \hline \end{array}&  \\
      & \mathrm{M}(\textbf{w}^{22})\Delta^{-1}
\end{array}
\end{array}\right) = \left(\begin{array}{c|c}
    A^{11} & A^{12}  \\ \hline
    A^{21} & A^{22}
\end{array} \right),
\end{equation*}
The block-by-block notation is inherited from the two species system. We use sub-blocks to separate the finite part and the tail of the operator. Each $A^{ij}_{11}$ is a linear operator on $\ell^1_{\nu,2N}$, this choice is explained in Remark~\ref{rem:size}, just below.

The blocks $A^{ij}_{11}$ are truncation of an inverted truncation of $DF(\overline{\textbf{U}})$, 
\begin{align*}
    \left(\begin{array}{c|c}
    A^{11}_{11} & A^{12}_{11}  \\ \hline
    A^{21}_{11} & A^{22}_{11}
\end{array} \right) \approx
\left[DF(\overline{\textbf{U}})|_{4N}\right]^{-1}\big|_{2N},
\end{align*}

and $\textbf{w}^{ij}$, $i,j\in{1,2}$, are elements of $\ell^1_{\nu,N}$ such that 
\begin{equation*}
\left\lbrace \begin{array}{ll}
\textbf{w}^{11}&\approx \gamma(\bar{\textbf{v}})^{-1}, \\
\textbf{w}^{12}&\approx -\dfrac{1}{d}\textbf{w}^{11}*(\gamma'(\bar{\textbf{v}})*\bar{\textbf{u}}),\\
\textbf{w}^{21}&= \textbf{0},\\
\textbf{w}^{22}&=\dfrac{1}{d}\textbf{1}.
\end{array} \right.
\end{equation*}
The choices made here are motivated by the main part of the operator $\Delta^{-1}DF(\overline{\textbf{U}})$ that we want to invert. Morally, we invert by hand the matrix $\begin{pmatrix} \gamma(\bar{v}) & \gamma'(\bar{v})\bar{u} \\ 0 & d \end{pmatrix}$ in $\ell^1_\nu$ in order to get the $w^{ij}$.

For later use, we also introduce
\begin{equation*}
    \quad \textbf{W}=
\left(\begin{array}{c|c}
    \textbf{w}^{11} & \textbf{w}^{12}  \\ \hline
    \textbf{w}^{21} & \textbf{w}^{22}
\end{array} \right).
\end{equation*}

\begin{rem} \label{rem:size}
    In all the following computations, we have $\bar{\textbf{u}}, \bar{\textbf{v}}$  belonging to $\ell^1_{\nu,N}$, with $N\in\mathbb{N}$. We want to keep the total information of the convolution of $\bar{\textbf{u}}$ with itself or with $\vecapp{\gamma(\bar{\textbf{v}})}$. Since we fix  $\vecapp{\gamma(\bar{\textbf{v}})} \in \ell^1_{\nu,N}$, we choose $\vecapp{F(\overline{\textbf{U}})} \in \ell^1_{\nu, 2N} \times \ell^1_{\nu, 2N}$. Then, to match, we take $\textbf{w}^{ij} \in \ell^1_{\nu,2N}$ and $A^{ij}_{11} \in \mathcal{L}(\ell^1_{\nu, 2N})$.
\end{rem}

\subsection{\texorpdfstring{The bound $Y$}{}}

The bound $Y$ is derived from the calculation of $\normenu{AF(\overline{\textbf{U}})}$. Let us do the math.
\begin{align*}
     AF(\overline{\textbf{U}})&= A \begin{pmatrix} \Delta(\gamma(\bar{\textbf{v}})*\bar{\textbf{u}})+\sigma\bar{\textbf{u}}*(\textbf{1}-\bar{\textbf{u}}) \\ d\Delta\bar{\textbf{v}}+\bar{\textbf{u}}-\bar{\textbf{v}}  \end{pmatrix}  \\
     &= A \begin{pmatrix}\Delta((\vecapp{\gamma(\bar{\textbf{v}})}+\vecdiff{\gamma(\bar{\textbf{v}})}) *\bar{\textbf{u}})+\sigma\bar{\textbf{u}}*(\textbf{1}-\bar{\textbf{u}}) \\ d\Delta\bar{\textbf{v}}+\bar{\textbf{u}}-\bar{\textbf{v}}\end{pmatrix} \\
     &= \underset{:=A\vecapp{F(\overline{\textbf{U}})}, \text{ completely known and computable exactly }}{\underbrace{A
     \begin{pmatrix}\Delta(\vecapp{\gamma(\bar{\textbf{v}})} *\bar{\textbf{u}})+\sigma\bar{\textbf{u}}*(\textbf{1}-\bar{\textbf{u}}) \\ d\Delta\bar{\textbf{v}}+\bar{\textbf{u}}-\bar{\textbf{v}}\end{pmatrix}}}
     + A\begin{pmatrix}\Delta(\vecdiff{\gamma(\bar{\textbf{v}})}*\bar{\textbf{u}}) \\ \textbf{0} \end{pmatrix}.
\end{align*}
We then focus on the rest,
\begin{align*}
    A\begin{pmatrix}\Delta(\vecdiff{\gamma(\bar{\textbf{v}})}*\bar{\textbf{u}}) \\ \textbf{0}\end{pmatrix} &= \left(\begin{array}{c|c}
    A^{11} & A^{12}  \\ \hline
    A^{21} & A^{22}
    \end{array} \right) \begin{pmatrix}\Delta(\vecdiff{\gamma(\bar{\textbf{v}})}*\bar{\textbf{u}}) \\ \textbf{0} \end{pmatrix} \\
    &= \begin{pmatrix} A^{11} \Delta(\vecdiff{\gamma(\bar{\textbf{v}})}*\bar{\textbf{u}}) \\
A^{21} \Delta(\vecdiff{\gamma(\bar{\textbf{v}})}*\bar{\textbf{u}})
\end{pmatrix} . 
\end{align*}
So,
\begin{align*}
\normenu{A\begin{pmatrix}\Delta(\vecdiff{\gamma(\bar{\textbf{v}})}*\bar{\textbf{u}}) \\ \textbf{0} \end{pmatrix}} &= \normenu{A^{11} \Delta(\vecdiff{\gamma(\bar{\textbf{v}})}*\bar{\textbf{u}})} + \normenu{A^{21}\Delta(\vecdiff{\gamma(\bar{\textbf{v}})}*\bar{\textbf{u}})} \\
    &\leq (\normenu{A^{11}\Delta}+\normenu{A^{21}\Delta}) \normenu{\vecdiff{\gamma(\bar{\textbf{v}})}*\bar{\textbf{u}}}  \\
    &\leq (\normenu{A^{11}\Delta}+\normenu{A^{21}\Delta}) \normenu{\vecdiff{\gamma(\bar{\textbf{v}})}}\normenu{\bar{\textbf{u}}} \\
    &\leq (\normenu{A^{11}\Delta}+\normenu{A^{21}\Delta}) \normenu{\bar{\textbf{u}}} \vecbound{\gamma(\bar{\textbf{v}})} .
\end{align*}
Thus we take,
\begin{equation*}
\boxed{
    Y = \normenu{A\vecapp{F(\overline{\textbf{U}})}} + (\normenu{A^{11}\Delta}+\normenu{A^{21}\Delta})  \normenu{\bar{\textbf{u}}} \vecbound{\gamma(\bar{\textbf{v}})}.
    }
\end{equation*}

It satisfies \eqref{boundY} in Theorem~\ref{th:NK}, $\normenu{AF(\overline{\textbf{U}})} \leq Y$.

\begin{rem}
    The computations made with the approximate vectors are said to be computable exactly. This is because they are finite (belong to a certain $\ell^1_{\nu,N}$ with $N$ not too large) and thanks to interval arithmetic \cite{intlabRump} these vectors can be calculated exactly.
\end{rem}

\subsection{\texorpdfstring{The bound $Z_1$}{}}
The bound $Z_1$ is derived to control $\normenu{I-ADF(\overline{\textbf{U}})}$. We use the same idea as before. We divide the terms related to $\gamma$ between parts we know exactly, and small remainders whose norm we can bound. Furthermore, we have to deal with an infinite dimensional operator, so we can only use the computer for a finite part, and then have to estimate the  ``tail'' by hand. Firstly, as above, we obtain
\begin{align*}
\normenu{I - ADF(\overline{\textbf{U}})} &\leq \normenu{I - A\vecapp{DF(\overline{\textbf{U}})}} + (\normenu{A^{11}\Delta}+\normenu{A^{21}\Delta})(\vecbound{\gamma(\bar{\textbf{v}})}+ \normenu{\bar{\textbf{u}}}\vecbound{\gamma'(\bar{\textbf{v}})}), \\
\text{ with } \vecapp{DF(\overline{\textbf{U}})} &= \begin{pmatrix} \Delta\vecapp{\gamma(\bar{\textbf{v}})}+\sigma(\textbf{1}-2\bar{\textbf{u}}) & \Delta(\vecapp{\gamma'(\bar{\textbf{v}})}*\bar{\textbf{u}}) \\ \textbf{1} & d\Delta-\textbf{1}\end{pmatrix}.
\end{align*}
We cannot compute exactly $B = I - A\vecapp{DF(\overline{\textbf{U}})}$ because $A$ and $\vecapp{DF(\overline{\textbf{U}})}$ are built with an infinite ``tail''. However, we can compute exactly a finite number of $B_{i,j}$. It leads us to separate between the columns whose norm we will compute numerically and exactly, and those we will bound by hand. With $K \geq 2N-1$ to be fixed later: 
\begin{align*}
    \normenu{B} &= \sup_{k\geq 0} \dfrac{1}{\xi_k(\nu)} \max \Big( \normenu{B^{11}_{(\cdot,k)}}+\normenu{B^{21}_{(\cdot,k)}}, \, \normenu{B^{12}_{(\cdot,k)}}+\normenu{B^{22}_{(\cdot,k)}} \Big) \\
    &= \max \Bigg(\sup_{k\leq K} \dfrac{1}{\xi_k(\nu)} \max \Big( \normenu{B^{11}_{(\cdot,k)}}+\normenu{B^{21}_{(\cdot,k)}}, \, \normenu{B^{12}_{(\cdot,k)}}+\normenu{B^{22}_{(\cdot,k)}} \Big), \\
    & \hphantom{\max\Bigg(\Bigg)} \sup_{k\geq K+1} \dfrac{1}{\xi_k(\nu)} \max \Big( \normenu{B^{11}_{(\cdot,k)}}+\normenu{B^{21}_{(\cdot,k)}}, \, \normenu{B^{12}_{(\cdot,k)}}+\normenu{B^{22}_{(\cdot,k)}} \Big) \ \Bigg).
\end{align*}

Let us denote, the ``finite'' part to compute,
\begin{align*}
    Z_{1,finite} &:= \displaystyle\sup_{k\leq K} \dfrac{1}{\xi_k(\nu)} \max \left( \normenu{B^{11}_{(\cdot,k)}}+\normenu{B^{21}_{(\cdot,k)}}, \, \normenu{B^{12}_{(\cdot,k)}}+\normenu{B^{22}_{(\cdot,k)}} \right),
\end{align*}
and the ``tail'' to bound,
\begin{align}
\mathcal{T}_{K,\nu}(B) &:= \sup_{k\geq K+1} \dfrac{1}{\xi_k(\nu)} \max \left( \normenu{B^{11}_{(\cdot,k)}}+\normenu{B^{21}_{(\cdot,k)}}, \, \normenu{B^{12}_{(\cdot,k)}}+\normenu{B^{22}_{(\cdot,k)}} \right) \notag \\ 
&= \underset{(\vectrunc[\textbf{z}]{K},\vectrunc[\textbf{z}']{K})=(\textbf{0},\textbf{0})}{\sup_{(\textbf{z},\textbf{z}') \neq (\textbf{0},\textbf{0})}} \dfrac{\normenu{B^{11}\textbf{z}+B^{12}\textbf{z}'} + \normenu{B^{21}\textbf{z}+B^{22}\textbf{z}'}}{\normenu{\textbf{z}}+\normenu{\textbf{z}'}} . \label{eq:tail}
\end{align}

\subsubsection{The finite part to compute}
Here we give an explanation of the calculation of $Z_{1,finite}$ made by the computer. 
To compute $Z_{1,finite}$, we have to understand correctly the calculation that determines $B^{ij}_{(\cdot,k)}$. 

We have for any $k\leq K$ and $(i,j)\in \lbrace 1,2\rbrace$, $B^{ij}_{(\cdot,k)} = \delta_{ij}I_{(\cdot,k)} - (A^{i1}\vecapp{DF(\overline{\textbf{U}})}^{1j}_{(\cdot,k)} + A^{i2}\vecapp{DF(\overline{\textbf{U}})}^{2j}_{(\cdot,k)}). $

Let $ l \in\lbrace 1,2\rbrace$, let us take a look at the column of $\vecapp{DF(\overline{\textbf{U}})}^{lj}$ (see Figure~\ref{fig:matrix} for a global view),
\begin{equation*}
    \vecapp{DF(\overline{\textbf{U}})}^{lj}_{(\cdot,k)} = \left \lbrace \begin{array}{ll} ( \underset{2N+k}{\underbrace{* \cdots \cdots *}} \ 0 \cdots )^\mathrm{T}, \ \text{ if } k\in \lbrace 0, \dots, 2N-1 \rbrace, \\
    ( \underset{k-2N+1}{\underbrace{0 \cdots \cdots 0}} \ \underset{4N-1 }{\underbrace{*\cdots \cdots *}} \ 0 \cdots )^\mathrm{T}, \ \text{ if } k\in \lbrace 2N ,\dots, K \rbrace.
    \end{array} \right.
\end{equation*}

Thus, to be exact in the matrix-vector product $A^{il}\vecapp{DF(\overline{\textbf{U}})}^{lj}_{(\cdot,k)}$, it is sufficient to take the first $k+2N$ rows of $A^{il}$. These rows have the same structure as the columns $\vecapp{DF(\overline{\textbf{U}})}^{lj}_{(\cdot,k)}$, except the transpose operation. Let $p\in \mathbb{N}$, we have the following equality:

\begin{equation*}
    \left(A^{il}\vecapp{DF(\overline{\textbf{U}})}^{lj}_{(\cdot,k)}\right)_{p} = \left \lbrace \begin{array}{ll} 
    \displaystyle\sum_{q=0}^{k+2N-1}A^{il}_{(p,q)}\vecapp{DF(\overline{\textbf{U}})}^{lj}_{(q,k)}, \text{ if } p \leq k+4N-2, \\
    0, \text{ if } p \geq k+4N-1 , \end{array} \right.
\end{equation*}
because for all $p\geq k+4N-1$, $A^{il}_{(p,q)} = 0, \, \forall q \in  \lbrace0, \dots, k+2N-1\rbrace \subset \lbrace0,\dots, p-2N\rbrace$.

The situation is summarized on Figure~\ref{fig:matrix}. Remember we start counting at 0 for the columns and the rows.

\begin{center}
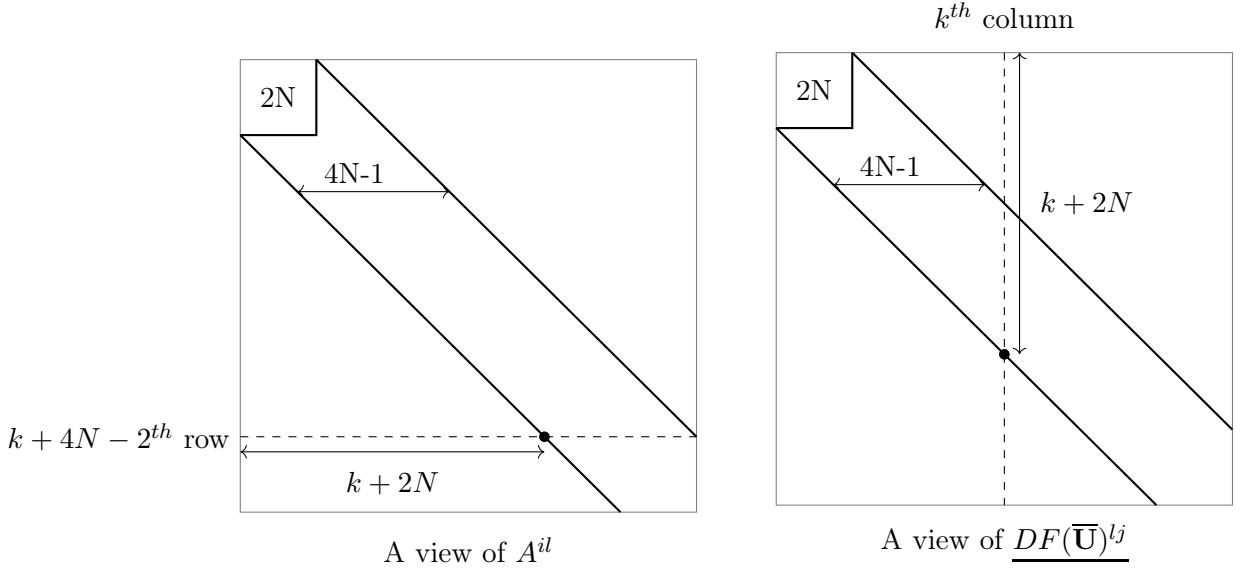
\begin{figure}[ht]
\begin{tikzpicture}
\draw[gray, thin] (0,1) rectangle (6,-5);
\draw[black, thick] (0,0) -- (1,0) -- (1,1);
\node at (0.5,0.5) {2N};
\draw[black, thick] (1,1) -- (6,-4);
\draw[black, thick] (0,0) -- (5,-5);
\node at (1.5,-0.5) {4N-1};
\draw[black, <->] (0.75, -0.75) -- (2.75,-0.75);
\node at (-1.6,-4) {$k + 4N-2^{th}$ row};
\draw[black, dashed] (0,-4) -- (6,-4);
\fill[black] (4,-4) circle (2pt);
\draw[black, <->] (0,-4.2) -- (4,-4.2) node[midway,label=below:$k + 2N$] {};
\node at (3,-5.5) {A view of $A^{il}$};
\end{tikzpicture}
\hspace{2em}
\begin{tikzpicture}
\draw[gray, thin] (0,1) rectangle (6,-5);
\draw[black, thick] (0,0) -- (1,0) -- (1,1);
\node at (0.5,0.5) {2N};
\draw[black, thick] (1,1) -- (6,-4);
\draw[black, thick] (0,0) -- (5,-5);
\node at (1.5,-0.5) {4N-1};
\draw[black, <->] (0.75, -0.75) -- (2.75,-0.75);
\node at (3,1.5) {$k^{th}$ column};
\draw[black, dashed] (3,1) -- (3,-5);
\fill[black] (3,-3) circle (2pt);
\draw[black, <->] (3.2,1) -- (3.2,-3) node[midway,label=right:$k + 2N$] {};
\node at (3,-5.5) {A view of $\vecapp{DF(\overline{\textbf{U}})^{lj}}$};
\end{tikzpicture}
\caption{Figure of the matrices}
\label{fig:matrix}
\end{figure}
\end{center}

Then, because we are interested in the first $K$ columns of $B^{ij}$, this is sufficient to take the first $K$ columns and the first $K+2N$ rows of $\vecapp{DF(\overline{\textbf{U}})^{lj}}, \ l \in\lbrace 1,2\rbrace$ and the first $K+2N$ columns and the first $K+4N-1$ rows of $A^{il}, \ l \in\lbrace 1,2\rbrace$ to compute exactly $Z_{1,finite}$.

\subsubsection{The infinite part to bound}
Let us bound the ``tail'' $\mathcal{T}_{K,\nu}(B)$ \eqref{eq:tail}. Let $K = 2N-1$, let $\textbf{z}\in \ell^1_\nu$ with $\vectrunc[\textbf{z}]{K}= 0$ and $\textbf{z}\neq 0$. This choice of $K$ means we look for the columns just after the finite part, this is sufficient to obtain the following calculations on $Z_{1,tail}$ and so $Z_1$.

Because $K \geq 2N-1$, we simply get
\begin{align*}
    B^{ij}\textbf{z}= \delta_{ij}\textbf{z}- \left( \mathrm{M}(\textbf{w}^{i1})\Delta^{-1} \vecapp{DF(\overline{\textbf{U}})}^{1j} + \mathrm{M}(\textbf{w}^{i2})\Delta^{-1}\vecapp{DF(\overline{\textbf{U}})}^{2j}\right)\textbf{z}.
\end{align*}
Indeed, the top-right block of $A^{ij}$, namely $A^{ij}_{11}$ only acts on the $2N$ (from $0$ to $2N-1$) first coefficients of $\textbf{z}$ which are null.

Recall that $\vecapp{DF(\overline{\textbf{U}})}^{ij} = \Delta \vecapp{D\Phi(\overline{\textbf{U}})}^{ij} + \vecapp{DR(\overline{\textbf{U}})}^{ij}$,
with 
\begin{align*}
    \overline{\textbf{U}} = \begin{pmatrix}\bar{\textbf{u}} \\ \bar{\textbf{v}} \end{pmatrix},\quad \Phi\begin{pmatrix}\bar{\textbf{u}} \\ \bar{\textbf{v}} \end{pmatrix} = \begin{pmatrix}\gamma(\bar{\textbf{v}}) * \bar{\textbf{u}} \\ d\bar{\textbf{v}} \end{pmatrix} \quad \text{and}\quad R\begin{pmatrix}\bar{\textbf{u}} \\ \bar{\textbf{v}} \end{pmatrix} =\begin{pmatrix}\sigma \bar{\textbf{u}}*( \textbf{1} -\bar{\textbf{u}}) \\ \bar{\textbf{u}}-\bar{\textbf{v}}\end{pmatrix}.
\end{align*}

Let us  compute $B^{ij}\textbf{z}$,
\begin{align*}
     B^{ij}\textbf{z} =\, & \delta_{ij}\textbf{z} - \left( \mathrm{M}(\textbf{w}^{i1}) \vecapp{D\Phi(\overline{\textbf{U}})}^{1j} + \mathrm{M}(\textbf{w}^{i2})\vecapp{D\Phi(\overline{\textbf{U}})}^{2j}\right)\textbf{z} \\
     &- \left(\mathrm{M}(\textbf{w}^{i1})\Delta^{-1} \vecapp{DR(\overline{\textbf{U}})}^{1j} + \mathrm{M}(\textbf{w}^{i2})\Delta^{-1}\vecapp{DR(\overline{\textbf{U}})}^{2j}\right)\textbf{z}.
\end{align*}
Since we have, by associativity, $\mathrm{M}(\textbf{w}^{ij})\vecapp{D\Phi(\overline{\textbf{U}})}^{jk}\textbf{z} = \mathrm{M}(\textbf{w}^{ij})(\vecapp{D\Phi(\overline{\textbf{U}})}^{jk}*\textbf{z}) = \textbf{w}^{ij}*(\vecapp{D\Phi(\overline{\textbf{U}})}^{jk}*\textbf{z}) = (\textbf{w}^{ij}*\vecapp{D\Phi(\overline{\textbf{U}})}^{jk})*\textbf{z}$.

Then,
\begin{align*}
B^{ij}\textbf{z} &= \left[ \left(  \begin{array}{c|c}
     \textbf{1} & \textbf{0}  \\ \hline
     \textbf{0} & \textbf{1} 
\end{array} \right) - \textbf{W}*\vecapp{D\Phi(\overline{\textbf{U}})} \right]^{ij}*\textbf{z} - \textbf{w}^{ij}*\left[ \Delta^{-1}\left(\vecapp{DR(\overline{\textbf{U}})}^{ij}*\textbf{z}\right)\right].
\end{align*}
Finally, with $(\textbf{z},\textbf{z}')$ such that $ (\vectrunc[\textbf{z}]{K},\vectrunc[\textbf{z}']{K}) = (\textbf{0},\textbf{0})$,
\begin{align*}
    B\begin{pmatrix}\textbf{z} \\ \textbf{z}' \end{pmatrix} &= \left( \left( 
    \begin{array}{c|c}
    \textbf{1} & \textbf{0}  \\ \hline
    \textbf{0} & \textbf{1} 
    \end{array} \right) - \textbf{W}*\vecapp{D\Phi(\overline{\textbf{U}})} \right)*\begin{pmatrix}\textbf{z} \\ \textbf{z}' \end{pmatrix} - \textbf{W}*\left[\left(\begin{array}{c|c}
     \Delta^{-1} & \textbf{0}  \\ \hline
     \textbf{0} & \Delta^{-1} 
\end{array} \right)\left(\vecapp{DR(\overline{\textbf{U}})}*\begin{pmatrix}\textbf{z}\\ \textbf{z}' \end{pmatrix}\right)\right] .
\end{align*}
So,
\begin{align*}
\normenu{B\begin{pmatrix}\textbf{z} \\ \textbf{z}'\end{pmatrix}} &\leq 
\normenu{\left( \left(  
    \begin{array}{c|c}
        \textbf{1} & \textbf{0}  \\ \hline
        \textbf{0} & \textbf{1} 
    \end{array} \right)  - \textbf{W} * \vecapp{D\Phi(\overline{\textbf{U}})}\right) \begin{pmatrix}\textbf{z} \\ \textbf{z}' \end{pmatrix} } + \normenu{\textbf{W}*\left[\left(\begin{array}{c|c}
     \Delta^{-1} & \textbf{0}  \\ \hline
     \textbf{0} & \Delta^{-1} 
\end{array} \right)\left(\vecapp{DR(\overline{\textbf{U}})}*\begin{pmatrix}\textbf{z}\\ \textbf{z}' \end{pmatrix}\right)\right]} \\
    &\leq \normenu{\left( \left(  
    \begin{array}{c|c}
        \textbf{1} & \textbf{0}  \\ \hline
        \textbf{0} & \textbf{1} 
    \end{array} \right)  - \textbf{W} * \vecapp{D\Phi(\overline{\textbf{U}})} \right)\begin{pmatrix}\textbf{z} \\ \textbf{z}' \end{pmatrix} } + \normenu{\textbf{W}} \normenu{\left(\begin{array}{c|c}
     \Delta^{-1} & \textbf{0}  \\ \hline
     \textbf{0} & \Delta^{-1} 
\end{array} \right)\left(\vecapp{DR(\overline{\textbf{U}})}*\begin{pmatrix}\textbf{z}\\ \textbf{z}' \end{pmatrix}\right) } \, .
\end{align*}

Also,
\begin{align*}
    \normenu{\left(\begin{array}{c|c}
     \Delta^{-1} & \textbf{0}  \\ \hline
     \textbf{0} & \Delta^{-1} 
\end{array} \right)\left(\vecapp{DR(\overline{\textbf{U}})}*\begin{pmatrix}\textbf{z}\\ \textbf{z}' \end{pmatrix}\right) } &= \normenu{\left(\begin{array}{c|c}
     \Delta^{-1} & \textbf{0}  \\ \hline
     \textbf{0} & \Delta^{-1} 
\end{array} \right)\begin{pmatrix}\sigma(\textbf{1}-2\bar{\textbf{u}})*\textbf{z}\\ \textbf{z}-\textbf{z}' \end{pmatrix}} \\
&= \normenu{\begin{pmatrix} \sigma\Delta^{-1}\left[(\textbf{1}-2\bar{\textbf{u}})*\textbf{z}\right]\\ \Delta^{-1}\left[\textbf{z}-\textbf{z}'\right] \end{pmatrix}} \\
&= \sigma \normenu{\Delta^{-1}\left[(\textbf{1}-2\bar{\textbf{u}})*\textbf{z}\right]} + \normenu{\Delta^{-1}\left[\textbf{z}-\textbf{z}'\right]} \\
&\leq \sigma \dfrac{(b-a)^2}{\pi^2(K-N+1)^2}\normenu{(\textbf{1}-2\bar{\textbf{u}})*\textbf{z}} + \dfrac{(b-a)^2}{\pi^2(K+1)^2}\normenu{\textbf{z}-\textbf{z}'} \\
&\leq \dfrac{(b-a)^2}{\pi^2(K-N+1)^2} \left(\sigma\normenu{(\textbf{1}-2\bar{\textbf{u}})*\textbf{z}} + \normenu{\textbf{z}}+ \normenu{\textbf{z}'} \right).
\end{align*}

\begin{rem}
    Since $\bar{\textbf{u}} \in \ell^1_{\nu,2N}$ and $\textbf{z} \in \ell^1_\nu, \ \vectrunc[\textbf{z}]{K} = \textbf{0}$, we have $\vectrunc[\bar{\textbf{u}}*\textbf{z}]{K-N}= \textbf{0}$.
    Furthermore, for any $\textbf{x}\in \ell^1_\nu$ such that $\vectrunc[\textbf{x}]{K_\textbf{x}} = \textbf{0}$ we have $ \ \normenu{\Delta^{-1} \textbf{x}} \leq \dfrac{(b-a)^2}{\pi^2(K_{\textbf{x}}+1)^2}\normenu{\textbf{x}}$. This explains the appearance of the $\dfrac{(b-a)^2}{\pi^2(K-N+1)^2}$ term in the above estimate.
\end{rem}

Thus, thanks to the precedent calculations and equation~\eqref{eq:tail}, we have
\begin{align*}
    \mathcal{T}_{K,\nu}(B) &\leq \mathcal{T}_{K,\nu}\left( \left(  \begin{array}{c|c}\textbf{1} & \textbf{0}  \\ \hline \textbf{0} & \textbf{1} \end{array} \right)  - \textbf{W} * \vecapp{{D\Phi(\overline{\textbf{U}})}}\right) \\
    & \hphantom{===}+ \normenu{\textbf{W}} \dfrac{(b-a)^2}{\pi^2(K-N+1)^2} \underset{(\vectrunc[\textbf{z}]{K},\vectrunc[\textbf{z}']{K})=(\textbf{0},\textbf{0})}{\sup_{(\textbf{z},\textbf{z}')\neq(\textbf{0},\textbf{0})}} \left( \dfrac{\sigma\normenu{(\textbf{1}-2\bar{\textbf{u}})*\textbf{z}} + \normenu{\textbf{z}}+ \normenu{\textbf{z}'}}{\normenu{\textbf{z}}+\normenu{\textbf{z}'}} \right).
\end{align*}

Firstly,
\begin{align*}
\mathcal{T}_{K,\nu}\left( \left(  \begin{array}{c|c}\textbf{1} & \textbf{0}  \\ \hline \textbf{0} & \textbf{1} \end{array} \right)  - \textbf{W} * \vecapp{{D\Phi(\overline{\textbf{U}})}}\right) 
    &\leq \normenu{\left(  \begin{array}{c|c}\textbf{1} & \textbf{0}  \\ \hline \textbf{0} & \textbf{1} \end{array} \right)  - \textbf{W} * \vecapp{{D\Phi(\overline{\textbf{U}})}}}.  
\end{align*}
Secondly,
\begin{align*}
    \underset{(\vectrunc[\textbf{z}]{K},\vectrunc[\textbf{z}']{K})=(\textbf{0},\textbf{0})}{\sup_{(\textbf{z},\textbf{z}')\neq(\textbf{0},\textbf{0})}} \left( \dfrac{\sigma\normenu{(\textbf{1}-2\bar{\textbf{u}})*\textbf{z}} + \normenu{\textbf{z}}+ \normenu{\textbf{z}'}}{\normenu{\textbf{z}}+\normenu{\textbf{z}'}} \right) 
    &\leq \underset{(\vectrunc[\textbf{z}]{K},\vectrunc[\textbf{z}']{K})=(\textbf{0},\textbf{0})}{\sup_{(\textbf{z},\textbf{z}')\neq(\textbf{0},\textbf{0})}} \left( \dfrac{\sigma\normenu{(\textbf{1}-2\bar{\textbf{u}})*\textbf{z}}}{\normenu{\textbf{z}}+\normenu{\textbf{z}'}}\right) + 1  \\
    &\leq \underset{\vectrunc[\textbf{z}]{K}=\textbf{0}}{\sup_{\textbf{z}\neq\textbf{0}}} \left(\dfrac{\sigma\normenu{(\textbf{1}-2\bar{\textbf{u}})*\textbf{z}}}{\normenu{\textbf{z}}} \right) + 1 \\
    &\leq \sigma\normenu{\textbf{1}-2\bar{\textbf{u}}} + 1 \, . \\
\end{align*}
Then, we have
\begin{align*}
    \mathcal{T}_{K,\nu}(B) &\leq \normenu{\left(  \begin{array}{c|c}\textbf{1} & \textbf{0}  \\ \hline \textbf{0} & \textbf{1} \end{array} \right)  - \textbf{W} * \vecapp{{D\Phi(\overline{\textbf{U}})}}} + \dfrac{(b-a)^2}{\pi^2(K-N+1)^2}\normenu{\textbf{W}}\left(\sigma\normenu{\textbf{1}-2\bar{\textbf{u}}} + 1 \right) \\
    &:= Z_{1,tail}.
\end{align*}

All of this is \emph{computable exactly}, and we take 
\begin{equation*}
\boxed{Z_1 = Z_{1,finite}+Z_{1,tail}+(\normenu{A^{11}\Delta}+\normenu{A^{21}\Delta})(\vecbound{\gamma(\bar{\textbf{v}})}+ \normenu{\bar{\textbf{u}}}\vecbound{\gamma'(\bar{\textbf{v}})}).}
\end{equation*} 
It satisfies \eqref{boundZ1} in Theorem~\ref{th:NK}, $\normenu{I-ADF(\overline{\textbf{U}})} \leq Z_1$.

\begin{rem}
    Notice that the injectivity of $A$, which is part of the assumptions needed in Theorem~\ref{th:NK} in order to obtain the existence of a zero of $F$, is in fact a consequence of the other assumptions in this setting. Indeed, as soon as assumption~\eqref{condNKa} is satisfied, then $Z_1<1$, which implies that $A$ is injective (see e.g. the proof of~\cite[Theorem 2.5]{B22}). 
\end{rem}

\subsection{\texorpdfstring{The bound $Z_2$}{}}
We assume here that we have done the work of Section~\ref{sec:tool} on $\gamma$ and its derivatives.
To compute $Z_2$ we need to compute the second Fréchet derivative of $F$. Let $\textbf{U} = \begin{pmatrix}\textbf{u} \\ \textbf{v} \end{pmatrix} \in (\ell^1_\nu)^2$. 

Let $\begin{pmatrix}\textbf{u}_1 \\ \textbf{v}_1 \end{pmatrix}$ and $\begin{pmatrix}\textbf{u}_2 \\ \textbf{v}_2 \end{pmatrix}$ belong to $(\ell^1_\nu)^2$,

\begin{align*}
    D^2F\begin{pmatrix}\textbf{u} \\ \textbf{v} \end{pmatrix}\left( \begin{pmatrix}\textbf{u}_1 \\ \textbf{v}_1 \end{pmatrix},\begin{pmatrix}\textbf{u}_2 \\ \textbf{v}_2 \end{pmatrix} \right) &= \Delta D^2\Phi\left( \begin{pmatrix}\textbf{u}_1 \\ \textbf{v}_1 \end{pmatrix},\begin{pmatrix}\textbf{u}_2 \\ \textbf{v}_2 \end{pmatrix} \right) + D^2R\left( \begin{pmatrix}\textbf{u}_1 \\ \textbf{v}_1 \end{pmatrix},\begin{pmatrix}\textbf{u}_2 \\ \textbf{v}_2 \end{pmatrix} \right),
\end{align*}
where 
\begin{align*}
    D^2\Phi\begin{pmatrix}\textbf{u} \\ \textbf{v} \end{pmatrix}\left( \begin{pmatrix}\textbf{u}_1 \\ \textbf{v}_1 \end{pmatrix},\begin{pmatrix}\textbf{u}_2 \\ \textbf{v}_2 \end{pmatrix} \right) &= \begin{pmatrix}\gamma'(\textbf{v})*\textbf{u}_1*\textbf{v}_2+\gamma'(\textbf{v})*\textbf{v}_1*\textbf{u}_2+\gamma''(\textbf{v})*\textbf{u}*\textbf{v}_1*\textbf{v}_2 \\ 0 \end{pmatrix},
\end{align*}
and 
\begin{align*}
    D^2R\begin{pmatrix}\textbf{u} \\ \textbf{v} \end{pmatrix}\left( \begin{pmatrix}\textbf{u}_1 \\ \textbf{v}_1 \end{pmatrix},\begin{pmatrix}\textbf{u}_2 \\ \textbf{v}_2 \end{pmatrix} \right) &= \begin{pmatrix}-2\sigma\textbf{u}*\textbf{u}_1*\textbf{u}_2 \\ 0 \end{pmatrix}.
\end{align*}
Let us denote
\begin{align*}
    S  \left(\begin{pmatrix}\textbf{u}_1 \\ \textbf{v}_1 \end{pmatrix}, \begin{pmatrix}\textbf{u}_2 \\ \textbf{v}_2 \end{pmatrix}\right) &= \Delta\left(\gamma'(\textbf{v})*\textbf{u}_1*\textbf{v}_2+\gamma'(\textbf{v})*\textbf{v}_1*\textbf{u}_2+\gamma''(\textbf{v})*\textbf{u}*\textbf{v}_1*\textbf{v}_2\right)  -2\sigma\textbf{u}*\textbf{u}_1*\textbf{u}_2.
\end{align*}
We have 
\begin{align*}
    AD^2F\begin{pmatrix}\textbf{u} \\ \textbf{v} \end{pmatrix}\left( \begin{pmatrix}\textbf{u}_1 \\ \textbf{v}_1 \end{pmatrix},\begin{pmatrix}\textbf{u}_2 \\ \textbf{v}_2 \end{pmatrix} \right) 
    &= \left(\begin{array}{c|c}
     A^{11} & A^{12}  \\ \hline
     A^{21} & A^{22}
    \end{array} \right) D^2F\begin{pmatrix}\textbf{u} \\ \textbf{v} \end{pmatrix}\left( \begin{pmatrix}\textbf{u}_1 \\ \textbf{v}_1 \end{pmatrix},\begin{pmatrix}\textbf{u}_2 \\ \textbf{v}_2 \end{pmatrix} \right) \\
    &= \begin{pmatrix}A^{11}S \\ A^{21}S \end{pmatrix}.
\end{align*}
So
\begin{align*}
    \normenu{AD^2F\begin{pmatrix}\textbf{u} \\ \textbf{v} \end{pmatrix}\left( \begin{pmatrix}\textbf{u}_1 \\ \textbf{v}_1 \end{pmatrix},\begin{pmatrix}\textbf{u}_2 \\ \textbf{v}_2 \end{pmatrix} \right)} &= \normenu{A^{11}S\left( \begin{pmatrix}\textbf{u}_1 \\ \textbf{v}_1 \end{pmatrix},\begin{pmatrix}\textbf{u}_2 \\ \textbf{v}_2 \end{pmatrix}\right)}+\normenu{A^{21}S\left( \begin{pmatrix}\textbf{u}_1 \\ \textbf{v}_1 \end{pmatrix},\begin{pmatrix}\textbf{u}_2 \\ \textbf{v}_2 \end{pmatrix}\right)}.
\end{align*}
We estimate 
\begin{align*}
    \normenu{A^{ij}S} = \sup_{\left( \begin{pmatrix}\textbf{u}_1 \\ \textbf{v}_1 \end{pmatrix}, \begin{pmatrix} \textbf{u}_2 \\ \textbf{v}_2 \end{pmatrix} \right)\neq (\textbf{0},\textbf{0})} \dfrac{\normenu{A^{ij} S \left( \begin{pmatrix}\textbf{u}_1 \\ \textbf{v}_1 \end{pmatrix},\begin{pmatrix}\textbf{u}_2 \\ \textbf{v}_2 \end{pmatrix} \right)}}{\normenu{\begin{pmatrix} \textbf{u}_1\\ \textbf{v}_1 \end{pmatrix}} \cdot \ \normenu{\begin{pmatrix}\textbf{u}_2 \\ \textbf{v}_2 \end{pmatrix}}} .
\end{align*}
\begin{align*}
    \normenu{A^{ij} S \left( \begin{pmatrix}\textbf{u}_1 \\ \textbf{v}_1 \end{pmatrix},\begin{pmatrix}\textbf{u}_2 \\ \textbf{v}_2 \end{pmatrix} \right)} &= 
    \normenu{A^{ij}\left( \Delta(\gamma'(\textbf{v})*(\textbf{u}_1*\textbf{v}_2+\textbf{v}_1*\textbf{u}_2)+\gamma''(\textbf{v})*\textbf{u}*\textbf{v}_1*\textbf{v}_2)  -2\sigma \textbf{u}*\textbf{u}_1*\textbf{u}_2 \right)} \\
    &\leq \normenu{A^{ij}\Delta(\gamma'(\textbf{v})*(\textbf{u}_1*\textbf{v}_2+\textbf{v}_1*\textbf{u}_2)) } + \normenu{A^{ij}\Delta(\gamma''(\textbf{v})*\textbf{u}*\textbf{v}_1*\textbf{v}_2)} \\
    &\hphantom{=}+ 2\sigma \normenu{A^{ij}\textbf{u}*\textbf{u}_1*\textbf{u}_2} \\
    &\leq \normenu{A^{ij}\Delta}\normenu{\gamma'(\textbf{v})}(\normenu{\textbf{u}_1}\normenu{\textbf{v}_2}+\normenu{\textbf{v}_1}\normenu{\textbf{u}_2}) \\
    &\hphantom{=}+ \normenu{A^{ij}\Delta}\normenu{\gamma''(\textbf{v})*\textbf{u}}\normenu{\textbf{v}_1}\normenu{\textbf{v}_2} \\
    &\hphantom{=}+ 2\sigma \normenu{A^{ij}\textbf{u}}\normenu{\textbf{u}_1}\normenu{\textbf{u}_2}.
\end{align*}
So 
\begin{align*}
    \normenu{AD^2F\begin{pmatrix}\textbf{u} \\ \textbf{v} \end{pmatrix}\left( \begin{pmatrix}\textbf{u}_1 \\ \textbf{v}_1 \end{pmatrix},\begin{pmatrix}\textbf{u}_2 \\ \textbf{v}_2 \end{pmatrix} \right)} &\leq (\normenu{A^{11}\Delta}+\normenu{A^{21}\Delta}) \normenu{\gamma'(\textbf{v})}(\normenu{\textbf{u}_1}\normenu{\textbf{v}_2}+\normenu{\textbf{v}_1}\normenu{\textbf{u}_2}) \\
    &\hphantom{=}+  (\normenu{A^{11}\Delta}+\normenu{A^{21}\Delta})\normenu{\gamma''(\textbf{v})*\textbf{u}} \normenu{\textbf{v}_1}\normenu{\textbf{v}_2} \\
    &\hphantom{=}+ 2\sigma (\normenu{A^{11}\textbf{u}} + \normenu{A^{21}\textbf{u}})\normenu{\textbf{u}_1}\normenu{\textbf{u}_2} .
\end{align*}
Finally, 
\begin{align} \label{eq:AD2F}
    \normenu{AD^2F\begin{pmatrix}\textbf{u} \\ \textbf{v} \end{pmatrix}} 
    \leq \max \Big( &(\normenu{A^{11}\Delta}+\normenu{A^{21}\Delta})\normenu{\gamma'(\textbf{v})}, \notag \\
    &(\normenu{A^{11}\Delta}+\normenu{A^{21}\Delta})\normenu{\gamma''(\textbf{v})*\textbf{u}}, \notag \\
    &2\sigma (\normenu{A^{11}\textbf{u}} + \normenu{A^{21}\textbf{u}}) \Big)\,.
\end{align}

In order to use Theorem~\ref{th:NK}, we have to get such an estimate for all $\begin{pmatrix} \textbf{u} \\ \textbf{v} \end{pmatrix}$ the neighbourhood of $\begin{pmatrix}\bar{\textbf{u}} \\ \bar{\textbf{v}} \end{pmatrix}$.

Let $r^*>0$ and $\begin{pmatrix} \textbf{u} \\ \textbf{v} \end{pmatrix} \in \mathcal{B}_\nu \left(\begin{pmatrix} \bar{\textbf{u}} \\ \bar{\textbf{v}} \end{pmatrix},r^*\right).$ We have directly $\normenu{\textbf{u}-\bar{\textbf{u}}} \leq r^*$ and $\normenu{\textbf{v}-\bar{\textbf{v}}} \leq r^*$. We can therefore apply the results of Section~\ref{sec:tool} with $\vecapp{\textbf{v}}=\bar{\textbf{v}}$ and $\vecbound{\textbf{v}}=r^*$, and $\vecapp{\textbf{u}}=\bar{\textbf{u}}$ and $\vecbound{\textbf{u}}=r^*$. In particular, we get a the full description of $\gamma'(\textbf{v})$ with $\vecapp{\gamma'(\bar{\textbf{v}})}$ and $\vecbound{\gamma'(\textbf{v})}$ that depends on $r^*$, and similarly for $\gamma''(\textbf{v})$. Thus we have,
\begin{align*}
    \textbf{u} &= \bar{\textbf{u}} + \vecdiff{\textbf{u}}, \text{ with }
    \normenu{\vecdiff{\textbf{u}}} \leq r^*, \\
    \gamma'(\textbf{v}) &= \vecapp{\gamma'(\bar{\textbf{v}})} + \vecdiff{\gamma'(\textbf{v})}, \text{ with }
    \normenu{\vecdiff{\gamma'(\textbf{v})}} \leq \vecbound{\gamma'(\textbf{v})}, \\
    \gamma''(\textbf{v}) &= \vecapp{\gamma''(\bar{\textbf{v}})} + \vecdiff{\gamma''(\textbf{v})}, \text{ with }
    \normenu{\vecdiff{\gamma''(\textbf{v})}} \leq \vecbound{\gamma''(\textbf{v})}.
\end{align*}

We inject these identities into the inequality~\eqref{eq:AD2F} and use triangle inequality to obtain $Z_2$.

To clarify, let us define:
\begin{align*}
    Z_2^a &= (\normenu{A^{11}\Delta} + \normenu{A^{21}\Delta} ) (\normenu{\vecapp{\gamma'(\textbf{v})}}+ \vecbound{\gamma'(\bar{\textbf{v}})}) , \\
    Z_2^b &= (\normenu{A^{11}\Delta} + \normenu{A^{21}\Delta})\Big[\normenu{\vecapp{\gamma''(\bar{\textbf{v}})}*\bar{\textbf{u}}} +\normenu{\vecapp{\gamma''(\bar{\textbf{v}})}}r^* + \left(\normenu{\bar{\textbf{u}}}+r^*\right)\vecbound{\gamma''(\textbf{v})}\Big], \\
    Z_2^c &= 2\sigma [\normenu{A^{11}\bar{\textbf{u}}} + \normenu{A^{21}\bar{\textbf{u}}}  + (\normenu{A^{11}} + \normenu{A^{21}})r^*] .
\end{align*}
Finally we have
\begin{equation*}
    \boxed{Z_2 = \max(Z_2^a,Z_2^b,Z_2^c) .}
\end{equation*}
It satisfies \eqref{boundZ2} in Theorem~\ref{th:NK}, $\normenu{AD^2F(\textbf{U})} \leq Z_2, \ \forall \textbf{U} \in \mathcal{B}_\nu(\overline{\textbf{U}},r^*)$.

\section{Proofs of the existence theorems}
\label{sec:proofs}

In this last section, we specify how to apply the techniques presented in this paper to prove Theorem~\ref{th:single_sol}, Theorem~\ref{th:WX} and Theorem~\ref{th:bif_diag}. All the computer-assisted parts of the proofs can be reproduced using the code available at~\cite{GitHub}.

\subsection{Proof of Theorem~\ref{th:single_sol}} \label{sec:proof_single}
Let $\sigma = 0.053$, $d = 1$, $\Omega=(0,3\pi)$ and $\gamma(x) = \dfrac{1}{1+x^9}$. The approximate steady state $(\bar{u},\bar{v})$ is depicted in Figure~\ref{fig:single}, the complete description of $\bar{u}$ and $\bar{v}$ in terms of Fourier coefficients can be found in the file \texttt{Datas/U\_initial\_single\_solution.mat} from~\cite{GitHub}. Given $r^* = 1\times 10^{-6}$, $N=100$ and $\nu = 1.0001$, we compute the bounds $Y,\ Z_1$ and $Z_2$ for this approximate solution according to Section~\ref{sec:bounds} and obtain the following results\footnote{The results shown are the upper bounds of each quantity calculated with \textsc{Matlab} and \textsc{Intlab}. We write the results in scientific notation to 5 significant digits.}:
\begin{itemize}
    \item[] $ Y = 2.4051 \times 10^{-8}, $
    \item[] $ Z_1 = 3.1194 \times 10^{-2},$
    \item[] $ Z_2 = 3.6100\times 10^{4} .$
\end{itemize}
We then check that condition~\eqref{condNK} is satisfied. Theorem~\ref{th:NK} therefore applies, and yields the existence of a locally unique zero $(\textbf{u},\textbf{v})$ of $F$ satisfying 
\begin{align*}
    \normenu{(\textbf{u},\textbf{v})-(\bar{\textbf{u}},\bar{\textbf{v}})} &\leq 2.5197\times 10^{-8}.
\end{align*}

The estimate announced in Theorem~\ref{th:single_sol} then simply follows from the fact that $ \sup_{x\in\Omega} \vert z(x)\vert \leq \normenu{\textbf{z}}$.

The computational parts of the proof (i.e. the evaluation of $Y,\ Z_1$ and $Z_2$ and the verification of assumption~\eqref{condNK}) can be reproduced by running the \textsc{Matlab} code \texttt{proof\_single\_solution.m} from~\cite{GitHub}, together with \textsc{Intlab}~\cite{intlabRump}.

\subsection{Proof of Theorem~\ref{th:WX}}
Let $\sigma = 0.6$, $d = 1$, $\Omega=(0,4\pi)$ and $\gamma(x) = \dfrac{1}{1+\e{9(x-1)}}$. The approximated steady state $(\bar{u},\bar{v})$ is described in Figure~\ref{fig:thWX}, the complete description of $\bar{u}$ and $\bar{v}$ in terms of Fourier coefficients can be found in the file \texttt{Datas/U\_initial\_WX.mat} after a Newton's method done in the proof. 
Given $r^* = 1\times 10^{-6}$, $N=100$ and $\nu = 1.0001$, for this approximate solution according to Section~\ref{sec:bounds} and obtain the following results\footnotemark[1]:
\begin{itemize}
    \item[] $ Y =  1.5327 \times 10^{-12},$
    \item[] $ Z_1 = 2.4338\times 10^{-2},$
    \item[] $ Z_2 = 6.4843\times 10^{2} .$
\end{itemize}
As before, the condition \eqref{condNK} is satisfied. Again thanks to Theorem~\ref{th:NK}, there exists $(\textbf{u},\textbf{v})$ a locally unique zero of $F$ satisfying
\begin{align*}
    \normenu{(\textbf{u},\textbf{v})-(\bar{\textbf{u}},\bar{\textbf{v}})} &\leq 1.6956\times 10^{-12}.
\end{align*}

The computational parts of the proof can be reproduced by running the \textsc{Matlab} code \texttt{proof\_WX.m} from~\cite{GitHub}, together with \textsc{Intlab}~\cite{intlabRump}.

\subsection{Proof of Theorem~\ref{th:bif_diag}}
Let $d = 1$, $\Omega=(0,3\pi)$ and $\gamma(x) = \dfrac{1}{1+x^9}$.
To prove the existence of solution for each point of the discrete diagram Figure~\ref{fig:th-discrete-diag}, we follow the exact same method as the proof of Section~\ref{sec:proof_single}. 
Let \texttt{Datas/UU\_discrete\_diagram.mat} be all the data of the each solution  shown in Figure~\ref{fig:th-discrete-diag}. Let $r^* = 1 \times 10^{-8}$, $N=150$ and $\nu = 1.0001$, for $i$ from $1$ to $\texttt{length(UU)}$ we compute the bounds $Y, \ Z_1$ and $Z_2$ associated to the system with the current approximate solution $(\bar{\textbf{u}},\bar{\textbf{v}}) = \texttt{UU(2:end,i)}$ with the $\sigma = \texttt{UU(1,i)}$ associated. We check if the condition \eqref{condNK} is satisfied. If so, we say that we have a validated solution, we apply Theorem~\ref{th:NK}: called $(\sigma,\bar{u},\bar{v})$ the validated approximate solution, we obtain a unique stationary solution $(\sigma,u,v)$ of~\eqref{eq:syst_para} that lives in a $r^*$-neighborhood. If not, increase $N$ and perform the proof on the part not yet validated.
The computational parts can also be reproduced by running the \textsc{Matlab} code \texttt{proof\_discrete\_diagram.m} from~\cite{GitHub}, together with \textsc{Intlab}~\cite{intlabRump}. 
Figure~\ref{fig:th-discrete-diag} can be obtained at the end of the script.

\section*{Declaration of interest}

The authors declare that they have no known competing financial interests or personal relationships that could have appeared to influence the work reported in this paper.

\section*{Acknowledgment}

MB and MP were supported by the ANR project CAPPS: ANR-23-CE40-0004-01.

\bibliographystyle{plain}
\bibliography{biblio.bib}
\end{document}